\newcommand{\RNum}[1]{\uppercase\expandafter{\romannumeral #1\relax}}
\newcolumntype{Y}{>{\centering\arraybackslash}X}
\DeclareMathOperator*{\argmax}{arg\,max}
\DeclareMathOperator*{\argmin}{arg\,min}
\newtheorem{thm}{Theorem}
\numberwithin{exm}{section}
\newtheorem{cor}{Corollary}
\newtheorem{rmk}{Remark}
\newtheorem{prop}{Proposition}
\theoremstyle{definition}
\def\spacingset#1{\renewcommand{\baselinestretch}%
{#1}\small\normalsize} \spacingset{1}
\title{\large On the sure screening properties of iteratively sure independence screening algorithms }
\author[1]{\normalsize Ning Zhang \thanks{Email: nzhang2018@gmail.com}}
\author[2, 1]{\normalsize Wenxin Jiang \thanks{Email: wjiang@northwestern.edu}}
\author[3, 2]{\normalsize Yuting Lan \thanks{Email: lan.yuting@mail.shufe.edu.cn}}
\affil[1]{\normalsize Shandong University\vspace{1ex}}
\affil[2]{\normalsize Northwestern University\vspace{1ex}}
\affil[3]{\normalsize Shanghai University of Finance and Economics}
\date{}
\begin{document}
\maketitle
\noindent\makebox[\textwidth][c]{%
\begin{minipage}{0.9\textwidth}
\spacingset{1.4}
\rule{\textwidth}{0.4pt}
{\bf{Abstract}}\\
\cite{sis} proposed the path-breaking theory of sure independence screening (SIS) and an iterative algorithm (ISIS) to effectively reduce the predictor dimension for further variable selection approaches. \cite{vanisis} extended ISIS to generalized linear models and introduced the Vanilla ISIS (Van-ISIS) algorithm, allowing selected predictors to be screened out in upcoming iterations. The success of SIS depends on its sure screening property, which was obtained by \cite{sis} under the marginal correlation assumption. However, despite wide applications of ISIS and Van-ISIS in various scientific fields, their sure screening properties have not been proved during the past decade. To fill this gap, we prove the sure screening properties of three different types of iterative algorithms for linear models without relying on the marginal correlation assumption, where ISIS and Van-ISIS can be regarded as two special cases of them.\\
{\it Keywords:} Iteratively sure independence screening; penalized least squares; sure screening property; variable screening; variable selection.\\
\rule{\textwidth}{0.4pt}
\end{minipage}}
\newpage
\section{Introduction}\label{sec:intro}

In the big data era, scientists are confronted with unprecedentedly massive and complex data in various fields, such as genomics, finance and earth sciences, etc. For high dimensional data with myriads of predictors, often very few of them are believed to be truly relevant to the response. Thus, how to extract key information and identify relevant predictors from high dimensional datasets becomes a great challenge for statisticians.

The past decade has witnessed an explosion in the development of variable screening techniques, which are designed to efficiently reduce the predictor dimension to a manageable size so that variable selection approaches, such as the LASSO \citep{lasso}, the SCAD \citep{scad}, the adaptive LASSO \citep{adalasso}, the elastic net \citep{elanet} and others, can be implemented smoothly afterwards to identify relevant predictors. The advance in variable screening techniques can be traced back to 2008 when \cite{sis} proposed the seminal sure independence screening (SIS) method. SIS could efficiently conduct the dimension reduction through ranking marginal correlations between predictors and the response. For ultrahigh dimensional data, applying SIS before variable selection techniques can significantly lower the computational cost of directly solving large-scale optimization problems. Most importantly, SIS could preserve all relevant predictors in the screening process with an overwhelming probability under certain assumptions, which is referred to as the sure screening property \citep{sis}.

The sure screening property is a main consideration when designing variable screening techniques since it ensures that all relevant predictors can be preserved with an overwhelming probability for upcoming variable selection procedures. Nevertheless, the sure screening property of SIS relies on the marginal correlation assumption, requiring marginal correlations between relevant predictors and the response to be bounded away from zero. Consequently, as pointed out by \cite{sis}, SIS may break down when any relevant predictor is jointly correlated but marginally uncorrelated with the response, or many irrelevant predictors have higher marginal correlations with the response than some relevant ones do. 

To avoid such undesirable results, \cite{sis} introduced the iteratively sure independence screening (ISIS) method, which iteratively employs SIS on remaining predictors and the residual vector obtained from the regression between the response and selected predictors to select candidate predictors. Then some variable selection approach, such as LASSO or SCAD, can be applied on those candidate predictors to determine which ones to be added to the selected model. Moreover, \cite{vanisis} extended ISIS to generalized linear models and proposed the Vanilla ISIS (Van-ISIS) algorithm, allowing predictors selected in previous steps to be screened out in the upcoming iterations. \cite{isisr} slightly modified the Van-ISIS algorithm and implemented it in the R package ``SIS'' to facilitate its applications in various fields.

Despite the outstanding performances of these iterative approaches in both simulation studies \citep{vanisis} and real data analyses \citep{isisapp}, their sure screening properties have not been theoretically verified during the past decade. In previous literatures, asymptotic properties of the variable selection methods and the variable screening techniques are always investigated separately. However, it is necessary to
study them simultaneously for both ISIS and Van-ISIS since each iteration of the algorithms can be regarded as a two-stage procedure, where some variable selection technique is applied on candidate predictors obtained from the screening results. Furthermore, the proof of the sure screening property for Van-ISIS can be even more challenging considering the fact that relevant predictors obtained in previous iterations can be dropped from the models selected afterwards.

To overcome these challenges, inspired by \cite{fr}'s proof of the screening consistency of forward regression (FR), we begin with proving the sure screening properties of a type of simplified iterative screening algorithms, where no variable selection approach is applied in each iteration. FR also falls into this category and our proof could lead to a sharper result than that in \cite{fr}. In the next, applying similar techniques, we prove the sure screening properties of the other two types of iterative algorithms with some variable selection method employed in each iteration. Consequently, the sure screening properties of ISIS and Van-ISIS can be achieved directly as special cases of these two types of algorithms.

The rest of the paper is organized as follows. In Section 2, we review the ISIS and Van-ISIS methods in detail and introduce the three types of iterative algorithms considered in our main theorems. We then list required assumptions for our theoretical results and formally describe the sure screening properties of these three types of algorithms in Section 3. Next, we present some preliminary results and prove our main theorems in Section 4, where the detailed proofs of preliminary results can be referred to the Appendix. Finally, we briefly summarize our results and discuss potential work in the future.

\section{Background}
In this section, we introduce necessary notation and review the SIS-based iterative screening methods for linear models, including ISIS \citep{sis}, Van-ISIS \citep{vanisis} and one of its variants \citep{isisr}. And three types of iterative algorithms are introduced at the end of this section as generalizations of them.
\subsection{Models and notation}
Throughout the paper, we consider the classic linear model
\[y=\bm{x}^\top \bm{\beta}+\epsilon,\]
where $y$ denotes the response, $\bm{x}=(x_1,\cdots,x_p)^\top $ denotes the predictor vector, $\bm{\beta}=(\beta_1,\cdots,\beta_p)^\top $ denotes the regression coefficient vector and $\epsilon$ denotes the random error. With $n$ realizations of $y$ and $\bm{x}$, the model can be written as
\[{Y}={X}\bm{\beta}+\bm{\epsilon},\]
where ${Y}=(Y_1,\cdots,Y_n)^\top  \in \mathbb{R}^n$ is the response vector, ${X}=[{X}_1,\cdots,{X}_p]\in \mathbb{R}^{n\times p}$ denotes the design matrix and $\bm{\epsilon}=(\epsilon_1,\cdots,\epsilon_n)^\top  \in \mathbb{R}^n$ consists of $n$ i.i.d random errors. Additionally, we denote $\mathcal{T}=\{j:\beta_j\neq 0\}$ as the true model of size $|\mathcal{T}|=t$, including indices of all the relevant predictors. 

For any index set $\mathcal S\subset\{1,\cdots,p\}$, let $\bm{\beta}_{\mathcal {S}}$ denote the subvector of $\bm{\beta}$ consisting of the $j$-th entry in $\bm{\beta}$ with $j\in {\mathcal S}$ and ${X}_{\mathcal S}$ denote the submatrix of ${X}$ with columns corresponding to ${\mathcal S}$. Furthermore, denote $C({X}_{\mathcal S})$ as the linear space spanned by columns of ${X}_{\mathcal S}$ and $C({X}_{\mathcal S})^\perp$ as its orthogonal complement. Then, if ${X}_{\mathcal S}$ is of full column rank, the orthogonal projection matrix on $C({X}_{\mathcal S})$ can be expressed as ${H}_{\mathcal S}={X}_{\mathcal S}({X}^\top _{\mathcal S}{X}_{\mathcal S})^{-1}{X}^\top _{\mathcal S}$ and ${M}_{\mathcal S}={I}_n-{H}_{\mathcal S}$ represents the orthogonal projection matrix on $C({X}_{\mathcal S})^\perp$ with $I_n$ denoting the $n\times n$ identity matrix.

\subsection{Iterative screening algorithms}
Initially, we review the ISIS algorithm proposed by \cite{sis}, which works as follows.

\begin{enumerate}[label={\em Step \arabic*.},align=left, leftmargin=*]
    \item Select the model $\mathcal{A}_1$ of size $a_1$ as
    \[\mathcal{A}_1=\left\{1\leq i \leq p:\,\,|X_i^\top Y| \text{ is among the first } a_1 \text{ largest of all}\right\}.\]
    Then obtain the submodel $\mathcal{B}_1$ from $\mathcal{A}_1$ through minimizing the penalized least squares (PLS) as
    \[\min_{\bm{\beta}\in\mathbb{R}^{{\scriptstyle a}_{\scalebox{.7}{$\scriptscriptstyle 1$}}}}\left[\big|\big|Y-X_{\mathcal{A}_1}\bm{\beta}\big|\big|^2+n\sum_{j=1}^{a_1}p_{\lambda_n}(|\beta_j|)\right],\]
    where $||\cdot||$ denotes the $L_2$ norm of vectors and $p_{\lambda_n}(\cdot)$ is some penalty function with a tuning parameter $\lambda_n>0$. Define  $\mathcal{S}_1=\mathcal{B}_1$ and the remaining model as $\mathcal{S}_1^c=\{1,\cdots,p\}-\mathcal{S}_1$.
    \item Based on the selected model $\mathcal{S}_k$ of size $s_k$, select the model $\mathcal{A}_{k+1}$ of size $a_{k+1}$ as
    \[\mathcal{A}_{k+1}=\left\{i \in\mathcal{S}_k^c:\,\,|X_i^\top M_{\mathcal{S}_k}Y| \text{ is among the first } a_{k+1} \text{ largest of all}\right\},\]
    where $M_{\mathcal{S}_k}Y$ denotes the residual vector from regressing $Y$ over $X_{\mathcal{S}_k}$. The submodel $\mathcal{B}_{k+1}$ is obtained from ${\mathcal{A}_{k+1}}$ by minimizing PLS between the residual vector and $X_{\mathcal{A}_{k+1}}$ as
    \[\min_{\bm{\beta}\in\mathbb{R}^{{\scriptstyle a}_{\scalebox{.7}{$\scriptscriptstyle k+1$}}}}\left[\big|\big|M_{\mathcal{S}_k}Y-X_{\mathcal{A}_{k+1}}\bm{\beta}\big|\big|^2+n\sum_{j=1}^{a_{k+1}}p_{\lambda_n}(|\beta_j|)\right].\]
    Set $\mathcal{S}_{k+1}=\mathcal{S}_{k}\cup\mathcal{B}_{k+1}$ and $\mathcal{S}_{k+1}^c=\{1,\cdots,p\}-\mathcal{S}_{k+1}$.
    \item Iterate Step 2 until we obtain the model $\mathcal{S}_k$ with $k=\kappa$ for some predetermined maximum number of iterations $\kappa$.
\end{enumerate}

In the ISIS algorithm, predictors selected in each iteration are included in the final model. However, it is not the case for the Van-ISIS \citep{vanisis} approach, which operates as follows.
\begin{enumerate}[label={\em Step \arabic*.}, align=left, leftmargin=*]
    \item Select the model $\mathcal{A}_1$ of size $a_1$ as
    \[\mathcal{A}_1=\left\{1\leq i \leq p:\,\,||M_iY||^2 \text{ is among the first } a_1 \text{ smallest of all}\right\},\]
    that is, model $\mathcal{A}_1$ corresponds to predictors that lead to the smallest residual sums of squares (RSS) in the componentwise regression with the response. Then model $\mathcal{S}_1$ is obtained through solving the PLS problem
    \[\min_{\bm{\beta}\in\mathbb{R}^{{\scriptstyle a}_{\scalebox{.7}{$\scriptscriptstyle 1$}}}}\left[\big|\big|Y-X_{\mathcal{A}_{1}}\bm{\beta}\big|\big|^2+n\sum_{j=1}^{a_{1}}p_{\lambda_n}(|\beta_j|)\right].\]
    
    \item Based on the model $\mathcal{S}_k$ of size $s_k$, we choose the model $\mathcal{A}_{k+1}$ as
    \[\mathcal{A}_{k+1}=\left\{i \in\mathcal{S}_k^c:\,\,||M_{\mathcal{S}_k\cup\{i\}}Y||^2 \text{ is among the first } a_{k+1} \text{ smallest}\right\},\]
    where $M_{\mathcal{S}_k\cup\{i\}}Y$ denotes the residual obtained from regressing $Y$ over $X_{\mathcal{S}_k}$ and $X_i$ for each $i\in\mathcal{S}_k^c$. The model $\mathcal{S}_{k+1}$ is determined by the variable selection procedure on $X_{\mathcal{S}_k\cup\mathcal{A}_{k+1}}=[X_{\mathcal{S}_k},X_{\mathcal{A}_{k+1}}]$ as
    \[\min_{\bm{\beta}\in\mathbb{R}^{{\scriptstyle s}_{\scalebox{.7}{$\scriptscriptstyle k$}}+{\scriptstyle a}_{\scalebox{.7}{$\scriptscriptstyle k+1$}}}}\left[\big|\big|Y-X_{\mathcal{S}_k\cup\mathcal{A}_{k+1}}\bm{\beta}\big|\big|^2+n\sum_{j=1}^{s_k+a_{k+1}}p_{\lambda_n}(|\beta_j|)\right].\]
    \item Iterate Step 2 until we obtain the model $\mathcal{S}_k$ with $k=\kappa$ for some predetermined maximum number of iterations $\kappa$.
\end{enumerate}

\cite{isisr} slightly modified the Van-ISIS algorithm and implemented it in the R package ``SIS''. We denote the modified algorithm as Van-ISIS-R and the only difference between Van-ISIS and Van-ISIS-R arises in the criterion of selecting $\mathcal{A}_{k+1}$. Instead of choosing the predictors that minimize the RSS, Van-ISIS-R computes 
\begin{equation}\label{rbetaj}
(\hat{\bm{\beta}}_{\mathcal{S}_k},\hat\beta_i)=\argmin_{{\bm{\beta}}_{\mathcal{S}_k}\in\mathbb{R}^{{\scriptstyle s}_{\scalebox{.7}{$\scriptscriptstyle k$}}},\beta\in\mathbb{R}}\big|\big|Y-X_{\mathcal{S}_k}\bm{\beta}_{\mathcal{S}_k}-X_i\beta\big|\big|^2,
\end{equation}
and determines $\mathcal{A}_{k+1}$ as 
\[\mathcal{A}_{k+1}=\left\{i \in\mathcal{S}_k^c:\,\,\big|\hat\beta_i\big| \text{ is among the first } a_{k+1} \text{ largest of all } \big|\hat\beta_i\big|s\right\}.\]

Each iteration of aforementioned iterative algorithms can be regarded as a two-stage variable selection procedure, where some variable selection method is applied after the set of candidate predictors is determined. To investigate their sure screening properties, we begin with considering a type of simplified iterative algorithms with no variable selection method applied, including the non-penalized versions of ISIS, Van-ISIS and Van-ISIS-R, where the new model $\mathcal{S}_{k+1}$ is determined as $\mathcal{S}_{k+1}=\mathcal{S}_{k}\cup\mathcal{A}_{k+1}$ in each iteration of these three algorithms. It is also noteworthy that FR is equivalent to the non-penalized version of Van-ISIS with $a_k=1$ for $k\geq 1$.

After scrutinizing these penalized and their non-penalized versions, we see that each of them can be regarded as a combination of some screening procedure determining $\mathcal{A}_{k+1}$ based on $\mathcal{S}_{k}$, and certain selection procedure choosing $\mathcal{S}_{k+1}$ from $\mathcal{A}_{k+1}$ and $\mathcal{S}_{k}$. For the screening procedure, we can choose among the following three criteria.
\begin{enumerate}[align=left, leftmargin=*]
\item $\mathcal{A}_{k+1}=\{i \in\mathcal{S}_k^c:\,\,|X_i^\top M_{\mathcal{S}_k}Y| \text{ is among the first } a_{k+1} \text{ largest of all}\}$.
\item $\mathcal{A}_{k+1}=\{i \in\mathcal{S}_k^c:\,\,||M_{\mathcal{S}_k\cup\{i\}}Y||^2 \text{ is among the first } a_{k+1} \text{ smallest of all}\}$.
\item $\mathcal{A}_{k+1}=\{i \in\mathcal{S}_k^c:\,\,|\hat\beta_i| \text{ is among the first } a_{k+1} \text{ largest of all}\}$, where $\hat\beta_i$ is computed as in equation \eqref{rbetaj}.
\end{enumerate}
For the selection procedure, we also have the following three choices.
\begin{enumerate}[align=left, leftmargin=*]
\item $\mathcal{S}_{k+1}=\mathcal{S}_{k}\cup\mathcal{A}_{k+1}$.
\item $\mathcal{S}_{k+1}=\mathcal{S}_{k}\cup\mathcal{B}_{k+1}$, where $\mathcal{B}_{k+1}$ is obtained by solving
\[\min_{\bm{\beta}\in\mathbb{R}^{{\scriptstyle a}_{\scalebox{.7}{$\scriptscriptstyle k+1$}}}}\left[\big|\big|M_{\mathcal{S}_k}Y-X_{\mathcal{A}_{k+1}}\bm{\beta}\big|\big|^2+n\sum_{j=1}^{a_{k+1}}p_{\lambda_n}(|\beta_j|)\right].\]
\item $\mathcal{S}_{k+1}$ is determined by solving
    \[\min_{\bm{\beta}\in\mathbb{R}^{{\scriptstyle s}_{\scalebox{.7}{$\scriptscriptstyle k$}}+{\scriptstyle a}_{\scalebox{.7}{$\scriptscriptstyle k+1$}}}}\left[\big|\big|Y-X_{\mathcal{S}_k\cup\mathcal{A}_{k+1}}\bm{\beta}\big|\big|^2+n\sum_{j=1}^{s_k+a_{k+1}}p_{\lambda_n}(|\beta_j|)\right].\]
\end{enumerate}

For simplicity, we only consider LASSO and SCAD in the selection procedure throughout the paper. LASSO employs the $L_1$ penalty with the form
\begin{equation}\label{p.lasso}
p_{\lambda_n}(\theta)=\lambda_n|\theta|.
\end{equation}
And the derivative of the SCAD penalty function is given by
\begin{equation}\label{p.scad}
p'_{\lambda_n}(\theta)=\lambda_n\left\{I(\theta\leq \lambda_n)+\frac{(a\lambda_n-\theta)_{+}}{(a-1)\lambda_n}I(\theta> \lambda_n)\right\}\quad \text{for some } a>2,
\end{equation}
where $p_{\lambda_n}(0)=0$ and $a$ is often set to $3.7$.

Therefore, there are totally nine combinations of the screening and selection criteria, which could cover all aforementioned iterative algorithms. For instance, the non-penalized version of ISIS is equivalent to the algorithm that applies the Screening Criterion 1 and the Selection Criterion 1, which can be denoted as SCR1-SEL1. Similarly, ISIS is equivalent to SCR1-SEL2, Van-ISIS can be written as SCR2-SEL3,  Van-ISIS-R is the same as SCR3-SEL3 and FR can be regarded as SCR2-SEL1 with $a_k=1$ for $k\geq 1$. In the rest of the paper, we consider the sure screening properties of the three types of iterative algorithms corresponding to the three different selection criteria. 

\section{Sure screening properties of iterative algorithms} 
In this section, we introduce necessary assumptions for our theoretical results and formally describe the sure screening properties of three types of iterative algorithms in three theorems.
\subsection{Technical assumptions}
Our main theorems rely on the following four technical assumptions.
\begin{enumerate}[label=({A\arabic*}),align=left, leftmargin=*]
    \item There exist positive constants $c_t$, $c_p$, $c_\beta$ and $c_y$, together with $\xi_t,\xi_y\geq 0$ and $\xi_p,\xi_\beta>0$ satisfying that $\xi_p+3\xi_\lambda<1$ with $\xi_\lambda=\xi_t+\xi_y+2\xi_{\beta}$, such that
    \[t=|\mathcal T|\leq c_tn^{\xi_t},\quad \log n < \log p\leq c_pn^{\xi_p},\]
    and
    \[\beta_{\text{min}}=\min_{i\in\mathcal{T}}|\beta_i|\geq c_{\beta}n^{-\xi_{\beta}},\quad \text{var}(y)=\sigma_y^2\leq c_y n^{\xi_y}.\]
    \item The distribution of $\bm{x}$ satisfies that, for any constant $s=O(n^{\xi_s})$ with some $\xi_s$ satisfying that $\xi_p+3\xi_s<1$, there exists some positive constant $c_s$ such that
    \begin{align*}
    P\Big[\tau_{\text{min}}\leq \min\limits_{|\mathcal{S}|\leq s}\lambda_{\text{min}}\{\hat{{\Sigma}}_{(\mathcal{S})}\}&\leq \max\limits_{|\mathcal{S}|\leq s}\lambda_{\text{max}}\{\hat{{\Sigma}}_{(\mathcal{S})}\}\leq\tau_{\text{max}}\Big]\\
    &\geq 1 - O\left(\exp\left(-c_s n^{1-2\xi_s}\right)\right),
    \end{align*}
    where $\hat{{\Sigma}}_{(\mathcal{S})}={X}_{\mathcal{S}}^\top {X}_{\mathcal{S}}/n$, $\lambda_{\text{min}}\{\hat{{\Sigma}}_{(\mathcal{S})}\}$ and $\lambda_{\text{max}}\{\hat{{\Sigma}}_{(\mathcal{S})}\}$ denote the smallest and largest eigenvalues of $\hat{{\Sigma}}_{(\mathcal{S})}$ respectively, and $\tau_{\text{min}}$ and $\tau_{\text{max}}$ are some constants satisfying $0<\tau_{\text{min}}<\tau_{\text{max}}<\infty$.
    \item The random error $\epsilon$ is independent of $\bm{x}$ and follows a sub-Gaussian distribution with zero mean and finite variance $\sigma^2$.
    \item The response $y$ follows a mean-zero distribution satisfying that there exist some positive constants $c_Y$ and $\xi_Y$, such that
    \[P\left(\frac{1}{n}\sum_{i=1}^nY_i^2\geq2\sigma_y^2\right)\leq O\left(\exp\left(-c_Yn^{\xi_Y}\right)\right).\]
\end{enumerate}

In assumption (A1), we set restrictions on the size of true model and the number of predictors, which coincides with the sparse model assumption and allows the predictor dimension to increase exponentially with the sample size. The assumption on $\beta_{\text{min}}$ was adopted in various literatures \citep{sis,holp,fr} to prevent non-zero coefficients from converging to zero too fast such that they can be identified consistently. And we also allow the variance of the response to diverge with the sample size.

\cite{fr} proved that assumption (A2) holds when $\bm{x}\sim N(\bm{0},\Sigma)$ and the eigenvalues of $\Sigma$ satisfy that 
\begin{equation}\label{covm}
2\tau_{\text{min}}\leq \lambda_{\text{min}}(\Sigma)\leq  \lambda_{\text{max}}(\Sigma)\leq 0.5\tau_{\text{max}}.
\end{equation}
Moreover, applying similar techniques and the Hoeffding's inequality \citep{hoeffding}, we can prove assumption (A2) for bounded predictors with the covariance matrix satisfying condition \eqref{covm}. 

Under assumption (A3), we have the following result for the weighted sum of random errors.
\begin{prop}[\citeauthor{ver}, \citeyear{ver}, Proposition 5.10]\label{prop:tail}
Suppose that $\epsilon$ follows a mean-zero sub-Gaussian distribution and let $\{\epsilon_i\}_{i=1}^n$ be $n$ independent realizations of $\epsilon$. Then for any $\bm{v}=(v_1,\cdots,v_n)\in \mathbb{R}^n$ with $||\bm{v}||^2=1$ and any $z>0$, we have
\[P\left(\bigg|\sum_{i=1}^n v_i\epsilon_i\bigg|>z\right)\leq O(\exp(-c_\epsilon z^2)),\]
where $c_\epsilon$ is some positive constant depending on the distribution of $\epsilon$.
\end{prop}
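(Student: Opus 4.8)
The plan is to use the stability of the sub-Gaussian class under linear combinations of independent coordinates, followed by a Chernoff-type argument. First I would record the equivalent characterization of sub-Gaussianity that is implicit in assumption (A3): since $\epsilon$ is mean-zero and sub-Gaussian, there is a constant $b>0$ depending only on the law of $\epsilon$ such that the moment generating function satisfies $E[e^{\lambda\epsilon}]\le e^{b^2\lambda^2/2}$ for every $\lambda\in\mathbb{R}$. This uniform exponential moment bound, with a proxy variance $b^2$ that does not depend on $n$, is precisely the property supplied by the sub-Gaussian hypothesis and is the only input needed.

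Next I would control the moment generating function of the weighted sum. Using that the $\epsilon_i$ are independent with common law equal to that of $\epsilon$,
\[
E\left[\exp\left(\lambda\sum_{i=1}^n v_i\epsilon_i\right)\right]=\prod_{i=1}^n E\left[e^{\lambda v_i\epsilon_i}\right]\le\prod_{i=1}^n e^{b^2\lambda^2 v_i^2/2}=\exp\left(\frac{b^2\lambda^2}{2}\sum_{i=1}^n v_i^2\right)=e^{b^2\lambda^2/2},
\]
where the final equality uses $\|\bm{v}\|^2=1$. Thus $\sum_{i=1}^n v_i\epsilon_i$ is itself sub-Gaussian with the same proxy variance $b^2$, uniformly in $n$ and in the choice of $\bm{v}$ on the unit sphere.

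Then I would apply Markov's inequality to $e^{\lambda\sum_i v_i\epsilon_i}$ with $\lambda>0$ and optimize: $P(\sum_i v_i\epsilon_i>z)\le e^{-\lambda z}\,E[e^{\lambda\sum_i v_i\epsilon_i}]\le e^{-\lambda z+b^2\lambda^2/2}$, and taking $\lambda=z/b^2$ gives $P(\sum_i v_i\epsilon_i>z)\le e^{-z^2/(2b^2)}$. Since $-\epsilon$ is also mean-zero sub-Gaussian with the same proxy variance, the identical bound holds for the lower tail, and a union bound yields $P(|\sum_i v_i\epsilon_i|>z)\le 2e^{-z^2/(2b^2)}=O(\exp(-c_\epsilon z^2))$ with $c_\epsilon=1/(2b^2)$, which depends only on the distribution of $\epsilon$ as claimed.

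I do not expect a genuine obstacle here: the only point requiring care is verifying that the sub-Gaussian characterization being invoked (the uniform MGF bound with an $n$-free proxy variance) is the one granted by (A3), after which the remainder is the standard Chernoff bound. If one prefers to bypass moment generating functions, the same conclusion follows from the $\psi_2$-Orlicz-norm characterization together with the rotation-invariance estimate $\|\sum_i v_i\epsilon_i\|_{\psi_2}^2\lesssim \sum_i v_i^2\,\|\epsilon\|_{\psi_2}^2=\|\epsilon\|_{\psi_2}^2$ and the tail bound implied by a finite $\psi_2$ norm; but the MGF route above is the most direct.
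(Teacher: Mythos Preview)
Your proposal is correct and follows the standard MGF/Chernoff route. Note, however, that the paper does not supply its own proof of this proposition: it is quoted verbatim as Proposition~5.10 of Vershynin and used as a black box, so there is no ``paper's proof'' to compare against beyond observing that your argument is essentially the textbook derivation underlying that reference.
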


According to Definition 1 in \cite{holp} and Proposition 5.16 in \cite{ver}, similar probability bounds exist for weighted sums of sub-exponential distributed variables. Thus, our main theorems hold for sub-exponential distributed random errors with slightly different probability bounds. For simplicity, we only consider sub-Gaussian distributed random errors in our proof, including normal distributed, Bernoulli distributed and other bounded random errors. 

Finally, by Proposition 5.10 and 5.16 in \cite{ver}, assumption (A4) holds for the normally distributed response and the bounded response. From the above discussion, we notice that assumptions (A1)-(A4) can be achieved simultaneously in at least two common scenarios, where the predictors and random error follow the normal distribution with a covariance matrix satisfying condition \eqref{covm} or all of them as well as the response are bounded with the same covariance matrix. It is also noteworthy that our proof does not need the marginal correlation assumption that the sure screening property of SIS relies on, which corroborates with the numerical results in \cite{sis} and \cite{vanisis}.

\subsection{Main theorems}
In the following theorems, we formally describe the sure screening properties of three types of iterative screening methods corresponding to the three selection criteria, where each type includes three algorithms employing different screening criterion but the same selection criterion. Then, the sure screening properties of ISIS, Van-ISIS and Van-ISIS-R can be obtained directly from these results.

We begin with algorithms that apply the Selection Criterion 1, where no penalized variable selection technique is involved in each iteration.
\begin{thm}\label{thm:sel1}
Let $\mathcal{S}_\kappa$ be the model obtained at the $\kappa$-th step of the iterative algorithm applying the Selection Criterion 1, satisfying that $\kappa\geq c_\kappa n^{\xi_y+2\xi_\beta}$ with $c_\kappa=8c_y\tau^3_{\text{max}}/(c_\beta^2\tau_{\text{min}}^4)$ and $\sum_{k=1}^\kappa a_k\leq c_sn^{\xi_s}$ for some positive constants $c_s$ and $\xi_s$ with $\xi_p+3\xi_s<1$. Then under assumptions (A1), (A2), (A3) and (A4), we have
\[P\left(\mathcal{T}\subset \mathcal{S}_\kappa\right)\geq 1-O\left(\exp\left(-c n^{\xi}\right)\right),\]
where $c$ is some positive constant and $\xi=\min\{{\xi_Y}, \xi_p+3\xi_s^*-2\xi_{\beta}, {1-2\xi_s^*}\}$ with $\xi_s^*=\xi_t\vee \xi_s$.
\end{thm}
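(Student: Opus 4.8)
\emph{Overall strategy.} The plan is a residual-budget argument on $R_k:=\|M_{\mathcal S_k}Y\|^2$, in the spirit of \cite{fr}. Selection Criterion 1 produces nested models $\mathcal S_1\subseteq\mathcal S_2\subseteq\cdots$, so if $\mathcal T\not\subset\mathcal S_\kappa$ then $\mathcal T\not\subset\mathcal S_k$ for \emph{every} $k\le\kappa$. I will show that on a high-probability event each such step forces $R_k-R_{k+1}\ge\delta_n$ for a deterministic $\delta_n$ of order $n\beta_{\min}^2$, whence $R_1-R_\kappa\ge(\kappa-1)\delta_n$ while $R_1\le\|Y\|^2\le2n\sigma_y^2$; choosing $c_\kappa$ so that $\kappa\delta_n>2n\sigma_y^2$ then gives a contradiction, so $\mathcal T\subset\mathcal S_\kappa$ must hold.

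\emph{The regular event.} Set $s^*=O(n^{\xi_s^*})$; this is admissible in (A2) since $\xi_p+3\xi_s^*=\max(\xi_p+3\xi_t,\,\xi_p+3\xi_s)<1$ (the first term because $\xi_t\le\xi_\lambda$). Let $\Omega_n$ be the intersection of (i) the eigenvalue bounds of (A2) over all $|\mathcal S|\le s^*$, (ii) $\tfrac1n\sum_iY_i^2<2\sigma_y^2$, and (iii) $\sup_{|\mathcal S|\le s^*,\,i\notin\mathcal S}|X_i^\top M_{\mathcal S}\bm\epsilon|\le\tfrac12 n\tau_{\min}\beta_{\min}$. By (A2) and (A4), $P((\mathrm{i})^c)=O(\exp(-c_sn^{1-2\xi_s^*}))$ and $P((\mathrm{ii})^c)=O(\exp(-c_Yn^{\xi_Y}))$. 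For (iii) I would use $\bm x\perp\epsilon$ to condition on $X$ and write $X_i^\top M_{\mathcal S}\bm\epsilon=\|M_{\mathcal S}X_i\|(\bm u_{\mathcal S,i}^\top\bm\epsilon)$ for a fixed unit vector $\bm u_{\mathcal S,i}$, bound the conditional tail by Proposition~\ref{prop:tail}, and take a union bound over the $\le p^{s^*+1}$ pairs $(\mathcal S,i)$ (using (i) to replace $\|M_{\mathcal S}X_i\|$ by $\sqrt{n\tau_{\max}}$); this is $o(1)$ because $\xi_p+\xi_s^*+2\xi_\beta<1$, which follows from (A1) together with $\kappa\le\sum_ka_k\le c_sn^{\xi_s}$ (forcing $\xi_s\ge\xi_y+2\xi_\beta$). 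Combining the three rates gives $P(\Omega_n^c)\le O(\exp(-cn^\xi))$ with $\xi=\min\{\xi_Y,\,\xi_p+3\xi_s^*-2\xi_\beta,\,1-2\xi_s^*\}$.

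\emph{Per-step reduction.} Work on $\Omega_n$ with $\mathcal T\not\subset\mathcal S_k$; put $m=|\mathcal T\setminus\mathcal S_k|\ge1$ and $G=X_{\mathcal T\setminus\mathcal S_k}^\top M_{\mathcal S_k}X_{\mathcal T\setminus\mathcal S_k}$. Since $G/n$ is a Schur complement inside $\hat\Sigma_{(\mathcal S_k\cup\mathcal T)}$, $\lambda_{\min}(G)\ge n\tau_{\min}$, so $\|G\beta_{\mathcal T\setminus\mathcal S_k}\|_2\ge n\tau_{\min}\|\beta_{\mathcal T\setminus\mathcal S_k}\|_2\ge n\tau_{\min}\sqrt m\,\beta_{\min}$. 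Applying $\|v\|_\infty\ge\|v\|_2/\sqrt m$ to $X_{\mathcal T\setminus\mathcal S_k}^\top M_{\mathcal S_k}Y=G\beta_{\mathcal T\setminus\mathcal S_k}+X_{\mathcal T\setminus\mathcal S_k}^\top M_{\mathcal S_k}\bm\epsilon$ and using (iii), the $\sqrt m$'s cancel and I obtain $i^*\in\mathcal T\setminus\mathcal S_k$ with $|X_{i^*}^\top M_{\mathcal S_k}Y|\ge\tfrac12 n\tau_{\min}\beta_{\min}$; removing this $\sqrt m$-loss is precisely what makes the bound sharper than \cite{fr}. For each screening criterion I then split on whether $i^*\in\mathcal A_{k+1}$: if so, monotonicity of RSS under $\mathcal S_{k+1}\supseteq\mathcal S_k\cup\{i^*\}$ gives $R_k-R_{k+1}\ge(X_{i^*}^\top M_{\mathcal S_k}Y)^2/\|M_{\mathcal S_k}X_{i^*}\|^2$; if not, every $i_0\in\mathcal A_{k+1}$ dominates $i^*$ in the relevant score (the inner product $|X_{i_0}^\top M_{\mathcal S_k}Y|$ for Criterion 1, the single-predictor RSS drop for Criterion 2, or $|\hat\beta_{i_0}|=|X_{i_0}^\top M_{\mathcal S_k}Y|/\|M_{\mathcal S_k}X_{i_0}\|^2$ for Criterion 3), and the same monotonicity applied to $i_0$ works. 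With $n\tau_{\min}\le\|M_{\mathcal S_k}X_j\|^2\le n\tau_{\max}$ from (i), every case yields $R_k-R_{k+1}\ge\delta_n:=n\tau_{\min}^3\beta_{\min}^2/(4\tau_{\max}^2)$, and plugging $\delta_n\ge c_\beta^2\tau_{\min}^3 n^{1-2\xi_\beta}/(4\tau_{\max}^2)$ and $\sigma_y^2\le c_yn^{\xi_y}$ into the budget argument of the first paragraph completes the proof.

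\emph{Main obstacle.} The crux is item (iii): because the selected supports $\mathcal S_k$ depend on the noise, the control of $X_i^\top M_{\mathcal S}\bm\epsilon$ must be made uniform over all $O(p^{s^*})$ candidate supports, and the exponent $\xi$ in the conclusion is exactly what survives the competition among this combinatorial factor, the sub-Gaussian tail at scale $n\beta_{\min}^2$, and the (A2) event at scale $n^{1-2\xi_s^*}$; checking that the exponent inequalities packaged into (A1) and into $\sum_k a_k\le c_sn^{\xi_s}$ indeed keep $\xi$ positive is where most of the care is spent. The per-step reduction, by contrast, is elementary once the $\sqrt m$-cancellation is noticed, and it goes through uniformly for all three screening rules.
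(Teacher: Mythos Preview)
Your proposal is correct and follows essentially the same residual-budget strategy as the paper: define a high-probability ``regular'' event (eigenvalue control, sub-Gaussian noise control via a union bound over all supports of size $\le s_\kappa^*$, and the $\|Y\|^2$ bound), then show that on this event the failure $\mathcal T\not\subset\mathcal S_k$ forces a uniform per-step RSS drop by splitting on whether the signal-maximizing index $i^*$ lies in $\mathcal A_{k+1}$, and sum the drops to a contradiction. Your technical choices differ only in minor ways---you obtain the signal lower bound via the Schur-complement inequality $\lambda_{\min}(X_{\mathcal T\setminus\mathcal S_k}^\top M_{\mathcal S_k}X_{\mathcal T\setminus\mathcal S_k})\ge n\tau_{\min}$ and the $\sqrt m$-cancellation, where the paper packages the same calculation as Proposition~\ref{prop:lb1}, and you bound $R_k-R_{k+1}$ by the single-index drop $(X_{i_0}^\top M_{\mathcal S_k}Y)^2/\|M_{\mathcal S_k}X_{i_0}\|^2$ via monotonicity rather than invoking Proposition~\ref{prop:rsslb} on all of $\mathcal A_{k+1}$---but these yield (slightly better) constants compatible with the stated $c_\kappa$, and your noise threshold $\tfrac12 n\tau_{\min}\beta_{\min}$ actually gives a tail exponent $1-2\xi_\beta$ for item~(iii) that is no worse than the paper's $\xi_p+3\xi_s^*-2\xi_\beta$, so the theorem follows a fortiori.
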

\begin{rmk}
The non-penalized versions of ISIS, Van-ISIS and Van-ISIS-R can be regarded as algorithms that apply the Selection Criterion 1 and FR is equivalent to the non-penalized Van-ISIS with $a_k=1$ for $k\geq 1$. Therefore, their sure screening properties can be achieved directly from Theorem \ref{thm:sel1}.
\end{rmk}

\begin{rmk} Under the assumption that $\bm{x}\sim N(\bm{0},\Sigma)$ with $\Sigma$ satisfying condition \eqref{covm},  \cite{fr} proved that FR could identify the true model within $O(n^{2\xi_t+4\xi_\beta})$ steps with an overwhelming probability.

Under the same assumption, we have
\[c_y n^{\xi_y}\geq\mathrm{var}(y)>\bm{\beta}^\top \Sigma\bm{\beta}\geq \lambda_{\min}(\Sigma)\cdot||\bm{\beta}||^2\geq 2\tau_{\min}c_{\beta}^2n^{-2\xi_\beta}\cdot t,\] 
which indicates that 
\[t\leq c_y n^{\xi_y+2\xi_\beta}/(2\tau_{\min}c_{\beta}^2).\]
Therefore, in this case one can take $\xi_t\geq \xi_y+2\xi_\beta$, and the minimum required number of iteration for the sure screening of FR indicated by Theorem \ref{thm:sel1} ($O(n^{\xi_y+2\xi_\beta})$) is less than the square root of that ($O(n^{2\xi_t+4\xi_\beta})$) in \cite{fr}.
\end{rmk}

Next, we investigate the sure screening properties of algorithms applying the Selection Criterion 2 and Selection Criterion 3, where LASSO or SCAD is employed in the determination of selected models.

\begin{thm}\label{thm:sel2}
Let $\mathcal{S}_\kappa$ be the model obtained at the $\kappa$-th step of the iterative algorithm applying the Selection Criterion 2 using LASSO or SCAD with a tuning parameter $\lambda_n\leq \tau_{\min}^{2}c_\beta n^{-\xi_\beta}/(4\tau_{\max})$. Then if $\kappa\geq 2c_\kappa n^{\xi_y+2\xi_\beta}$ with $c_\kappa=8c_y\tau^3_{\text{max}}/(c_\beta^2\tau_{\text{min}}^4)$ and $\sum_{k=1}^\kappa a_k\leq c_sn^{\xi_s}$ for some positive constants $c_s$ and $\xi_s$ with $\xi_p+3\xi_s<1$, under assumptions (A1), (A2), (A3) and (A4), we have
\[P\left(\mathcal{T}\subset \mathcal{S}_\kappa\right)\geq 1-O\left(\exp\left(-c n^{\xi}\right)\right),\]
where $c$ is some positive constant and $\xi=\min\{{\xi_Y}, \xi_p+3\xi_s^*-2\xi_{\beta}, {1-2\xi_s^*}\}$ with $\xi_s^*=\xi_t\vee \xi_s$.
\end{thm}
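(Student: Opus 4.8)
\noindent\textit{Proof proposal.} The plan is to closely follow the proof of Theorem~\ref{thm:sel1}, the only genuinely new ingredient being a bound that prevents the penalized selection step from destroying too much of the per‑iteration progress. First I would pass to the high‑probability event $\mathcal{E}$ built in the proof of Theorem~\ref{thm:sel1}: on $\mathcal{E}$ the eigenvalue inequalities of (A2) hold simultaneously for every index set of size at most $\sum_{k=1}^{\kappa}a_k+t=O(n^{\xi_s^*})$ with $\xi_s^*=\xi_t\vee\xi_s$, one has $\|Y\|^{2}<2n\sigma_y^{2}\le 2c_yn^{1+\xi_y}$ by (A4), and all error‑dependent terms entering the analysis are of smaller order than the signal; concretely $|X_j^{\top}M_{\mathcal{S}}\bm{\epsilon}|\le\tfrac12\tau_{\min}n\beta_{\min}$ uniformly over admissible pairs $(\mathcal{S},j)$, obtained from Proposition~\ref{prop:tail} and a union bound over the $O(p^{\sum a_k+t})$ such pairs. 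Assembling the three tail estimates gives $P(\mathcal{E})\ge 1-O(\exp(-cn^{\xi}))$ with the stated $\xi$, and from here on every statement is made on $\mathcal{E}$.

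Next I would re‑derive a one‑step decrement of the residual sum of squares. Fix an iteration $k$ with $\mathcal{T}\not\subset\mathcal{S}_k$ and write $\bm{\mu}=X\bm{\beta}$. Applying the eigenvalue bound to $\mathcal{S}_k\cup\mathcal{T}$ gives $\|M_{\mathcal{S}_k}\bm{\mu}\|^{2}\ge\tau_{\min}n\|\bm{\beta}_{\mathcal{T}\setminus\mathcal{S}_k}\|^{2}\ge\tau_{\min}n\,|\mathcal{T}\setminus\mathcal{S}_k|\,\beta_{\min}^{2}$ and $\sum_{j\in\mathcal{T}\setminus\mathcal{S}_k}(X_j^{\top}M_{\mathcal{S}_k}\bm{\mu})^{2}\ge\tau_{\min}n\|M_{\mathcal{S}_k}\bm{\mu}\|^{2}$ (the two estimates already used for Theorem~\ref{thm:sel1}); combined, they produce some $j_0\in\mathcal{T}\setminus\mathcal{S}_k$ with $(X_{j_0}^{\top}M_{\mathcal{S}_k}\bm{\mu})^{2}\ge\tau_{\min}^{2}n^{2}\beta_{\min}^{2}$, hence $|X_{j_0}^{\top}M_{\mathcal{S}_k}Y|\ge\tfrac12\tau_{\min}n\beta_{\min}$ after subtracting the noise. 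By the screening analysis of Theorem~\ref{thm:sel1} (carried out separately for Screening Criteria~1, 2 and 3), it follows that $\mathcal{A}_{k+1}$ contains an index $j^{*}\in\mathcal{S}_k^{c}$ with $|X_{j^{*}}^{\top}M_{\mathcal{S}_k}Y|\ge c_1 n\beta_{\min}$, where $c_1>0$ depends only on $\tau_{\min}$ and $\tau_{\max}$.

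The new step is the passage through the LASSO/SCAD fit. Let $r_k=M_{\mathcal{S}_k}Y$ and let $\hat{\bm{b}}$ be a global minimizer of $\|r_k-X_{\mathcal{A}_{k+1}}\bm{\beta}\|^{2}+n\sum_j p_{\lambda_n}(|\beta_j|)$ with support $\mathcal{B}_{k+1}\subseteq\mathcal{A}_{k+1}$; since $\mathcal{B}_{k+1}$ is disjoint from $\mathcal{S}_k$ one has $\|M_{\mathcal{S}_{k+1}}Y\|^{2}\le\|r_k-X_{\mathcal{A}_{k+1}}\hat{\bm{b}}\|^{2}$. I would not keep track of which variables $\mathcal{B}_{k+1}$ actually contains; instead I would compare $\hat{\bm{b}}$ with the single‑coordinate competitor $\gamma\,\bm{e}_{j^{*}}$, $\gamma=X_{j^{*}}^{\top}r_k/\|X_{j^{*}}\|^{2}$. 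Using the global optimality of $\hat{\bm{b}}$, discarding its nonnegative penalty, and the elementary inequality $p_{\lambda_n}(|\gamma|)\le\lambda_n|\gamma|$, valid for both \eqref{p.lasso} and \eqref{p.scad} because $p'_{\lambda_n}\le\lambda_n$,
\[
\|M_{\mathcal{S}_{k+1}}Y\|^{2}\le\|r_k\|^{2}-R+n\lambda_n|\gamma|,\qquad R:=\frac{(X_{j^{*}}^{\top}r_k)^{2}}{\|X_{j^{*}}\|^{2}}=\gamma^{2}\|X_{j^{*}}\|^{2}.
\]
Since $\tau_{\min}n\le\|X_{j^{*}}\|^{2}\le\tau_{\max}n$ by (A2), one gets $R\ge(c_1^{2}c_\beta^{2}/\tau_{\max})\,n^{1-2\xi_\beta}$ and $n\lambda_n|\gamma|\le\lambda_n\sqrt{nR/\tau_{\min}}$, and a short computation shows that the hypothesis $\lambda_n\le\tau_{\min}^{2}c_\beta n^{-\xi_\beta}/(4\tau_{\max})$ forces $n\lambda_n|\gamma|\le R/2$. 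Hence $\|M_{\mathcal{S}_{k+1}}Y\|^{2}\le\|M_{\mathcal{S}_k}Y\|^{2}-D$ with $D:=(c_1^{2}c_\beta^{2}/2\tau_{\max})\,n^{1-2\xi_\beta}$, which is exactly half of the decrement obtained for Theorem~\ref{thm:sel1} and explains the doubled iteration count. Iterating: if $\mathcal{T}\not\subset\mathcal{S}_k$ for all $k<K$ then $0\le\|M_{\mathcal{S}_K}Y\|^{2}\le\|Y\|^{2}-KD\le 2c_yn^{1+\xi_y}-KD$, so $K\le 2c_yn^{1+\xi_y}/D\le 2c_\kappa n^{\xi_y+2\xi_\beta}$ with $c_\kappa=8c_y\tau_{\max}^{3}/(c_\beta^{2}\tau_{\min}^{4})$; therefore, as soon as $\kappa\ge 2c_\kappa n^{\xi_y+2\xi_\beta}$, some $\mathcal{S}_k$ with $k\le\kappa$ contains $\mathcal{T}$, and since Selection Criterion~2 only enlarges the model this gives $\mathcal{T}\subset\mathcal{S}_\kappa$ on $\mathcal{E}$, whence $P(\mathcal{T}\subset\mathcal{S}_\kappa)\ge P(\mathcal{E})\ge 1-O(\exp(-cn^{\xi}))$.

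The main obstacle is the third paragraph: controlling the LASSO/SCAD step so that it cannot erase more than a constant fraction of the guaranteed per‑iteration reduction, uniformly over the data‑dependent sequences $\mathcal{S}_k$ and $\mathcal{A}_{k+1}$. The resolution — arguing only through global optimality of $\hat{\bm{b}}$ against an explicit competitor supported on the well‑screened variable $j^{*}$, and absorbing the penalty this competitor incurs into half of $R$ via the stated upper bound on $\lambda_n$ — leaves the rest of the argument identical to Theorem~\ref{thm:sel1}, the needed uniformity being inherited, as there, from the deterministic bound $\sum_{k}a_k+t=O(n^{\xi_s^*})$ on the size of every model that arises.
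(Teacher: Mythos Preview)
Your proposal is correct and follows essentially the same route as the paper: both arguments pin down an index $j^{*}\in\mathcal{A}_{k+1}$ with $|X_{j^{*}}^{\top}M_{\mathcal{S}_k}Y|$ large (handling the three screening criteria separately), compare the penalized minimizer against the single–coordinate OLS competitor at $j^{*}$, and use $p_{\lambda_n}(|\theta|)\le\lambda_n|\theta|$ together with the assumed bound on $\lambda_n$ to absorb the penalty into half of the one–step RSS drop, yielding exactly the decrement $\tfrac{c_\beta^{2}\tau_{\min}^{4}}{8\tau_{\max}^{3}}\,n^{1-2\xi_\beta}$ and hence the doubled iteration count $2c_\kappa n^{\xi_y+2\xi_\beta}$. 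The only cosmetic difference is that the paper telescopes the penalized objective $\rho_k$ (sandwiched via $\|M_{\mathcal{S}_k}Y\|^{2}\le\rho_k\le\|M_{\mathcal{S}_{k-1}}Y\|^{2}$) whereas you telescope $\|M_{\mathcal{S}_k}Y\|^{2}$ directly; the resulting inequalities and constants coincide.
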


\begin{rmk}
ISIS is equivalent to the algorithm applying the Selection Criterion 2 with the Screening Criterion 1 and thus its sure screening property can be obtained from Theorem \ref{thm:sel2}.
\end{rmk}

For the iterative algorithms applying the Selection Criterion 3, predictors selected in previous steps can be screened out in the later iterations. Therefore, the statement $\mathcal{T}\subset\mathcal{S}_{k}$ does not necessarily imply that $\mathcal{T}\subset\mathcal{S}_{k+1}$. Then we consider a weakened sure screening property for this third type of algorithms in the following theorem.
\begin{thm}\label{thm:sel3}
Let $\mathcal{S}_\kappa$ be the model obtained at the $\kappa$-th step of the
iterative algorithm applying the Selection Criterion 3 with LASSO or SCAD, satisfying that $\kappa\geq 2c_\kappa n^{\xi_y+2\xi_\beta}$ with $c_\kappa=8c_y\tau^3_{\text{max}}/(c_\beta^2\tau_{\text{min}}^4)$, $\sum_{k=1}^\kappa a_k\leq c_sn^{\xi_s}$ for some positive constants $c_s$ and $\xi_s$ with $\xi_p+3\xi_s<1$ and $\lambda_n\leq{\tau_{\min}^{4}c_\beta^2}n^{-2\xi_\beta-(\xi_s^*+\xi_y)/2}/({8\sqrt{c^*}\tau_{\max}^3})$ with $c^*=4c_y(c_t\vee c_s) /\tau_{\min}$. Then under assumptions (A1), (A2), (A3) and (A4), we have
\[P\left(\mathcal{T}\subset\mathcal{S}_k\text{ for some }1\leq k\leq \kappa\right)\geq 1-O\left(\exp\left(-c n^{\xi}\right)\right),\]
where $c$ is some positive constant and $\xi=\min\{{\xi_Y}, \xi_p+3\xi_s^*-2\xi_{\beta}, {1-2\xi_s^*}\}$ with $\xi_s^*=\xi_t\vee \xi_s$.
\end{thm}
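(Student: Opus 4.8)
The plan is to reduce Theorem~\ref{thm:sel3} to a purely deterministic ``residual-energy decrease'' statement that holds on a single high-probability event, exactly as one does for Theorems~\ref{thm:sel1} and~\ref{thm:sel2}. The one genuinely new feature is that Selection Criterion~3 is \emph{not} monotone: in general $\mathcal{S}_{k+1}\not\supseteq\mathcal{S}_k$, so one cannot propagate ``$\mathcal{T}\subset\mathcal{S}_k$ implies $\mathcal{T}\subset\mathcal{S}_{k+1}$''. Instead I would argue by contradiction: if $\mathcal{T}\not\subset\mathcal{S}_k$ for \emph{every} $k\le\kappa$, then each iteration drops the residual sum of squares $\mathrm{RSS}_k:=\|M_{\mathcal{S}_k}Y\|^2$ by a fixed amount of order $n\beta_{\min}^2$, which after $\kappa\gtrsim n^{\xi_y+2\xi_\beta}$ steps forces $\mathrm{RSS}_\kappa<0$ --- impossible. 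This yields the conclusion ``$\mathcal{T}\subset\mathcal{S}_k$ for some $k\le\kappa$'', which, because of drop-out, is the strongest statement available.

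\textbf{The good event.} First, I would fix $s^*\asymp n^{\xi_s^*}$ dominating both $t$ and $\sum_{k\le\kappa}a_k$, and take $\mathcal{E}$ to be the intersection of: (i) the eigenvalue event of (A2) applied with $s=2s^*$ (legitimate since $\xi_p+3\xi_s^*<1$), which via the Schur-complement inequality also yields $n\tau_{\min}\le\lambda_{\min}(X_{\mathcal{R}}^\top M_{\mathcal{S}}X_{\mathcal{R}})$ and $\|M_{\mathcal{S}}X_i\|^2\in[n\tau_{\min},n\tau_{\max}]$ whenever $|\mathcal{S}\cup\mathcal{R}|\le 2s^*$; (ii) the event $\tfrac1n\|Y\|^2<2\sigma_y^2$ of (A4); (iii) a tail event, obtained by applying Proposition~\ref{prop:tail} conditionally on $X$ (permissible by (A3)) and taking a union over all index sets $\mathcal{S}$ with $|\mathcal{S}|\le s^*$, that controls the error contamination $\|(H_{\mathcal{S}\cup\mathcal{T}}-H_{\mathcal{S}})\bm{\epsilon}\|$ (and the $|X_i^\top M_{\mathcal{S}}\bm{\epsilon}|$) by a small constant multiple of $\sqrt{n\tau_{\min}}\,\beta_{\min}$ in each of the $\le|\mathcal{R}|$ relevant orthonormal directions. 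Since there are at most $p^{s^*}=\exp(O(n^{\xi_s^*+\xi_p}))$ such subsets while each per-subset deviation fails with probability $O(\exp(-cn^{1-2\xi_s^*}))$ or $O(\exp(-cn^{1-2\xi_\beta}))$, a bookkeeping of these exponents using $\xi_p+3\xi_s<1$ (which one checks also forces $\xi_p+2\xi_s^*+2\xi_\beta<1$) together with the $\xi_Y$-rate of (A4) gives $P(\mathcal{E})\ge 1-O(\exp(-cn^{\xi}))$ with $\xi$ as in the statement.

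\textbf{The deterministic argument on $\mathcal{E}$.} On $\mathcal{E}$ one first checks inductively that $|\mathcal{S}_k|\le\sum_{j\le k}a_j\le s^*$, so all eigenvalue/tail controls apply at every iteration. Suppose $\mathcal{T}\not\subset\mathcal{S}_k$ and set $\mathcal{R}=\mathcal{T}\setminus\mathcal{S}_k\neq\varnothing$. Write $Z=(H_{\mathcal{S}_k\cup\mathcal{T}}-H_{\mathcal{S}_k})Y=M_{\mathcal{S}_k}X_{\mathcal{R}}\bm{\beta}_{\mathcal{R}}+(H_{\mathcal{S}_k\cup\mathcal{T}}-H_{\mathcal{S}_k})\bm{\epsilon}$; on $\mathcal{E}$ the signal part obeys $\|M_{\mathcal{S}_k}X_{\mathcal{R}}\bm{\beta}_{\mathcal{R}}\|^2\ge n\tau_{\min}|\mathcal{R}|\beta_{\min}^2$ and the error part has norm at most half of that, so $\langle Z,\,M_{\mathcal{S}_k}X_{\mathcal{R}}\bm{\beta}_{\mathcal{R}}\rangle\ge\tfrac12\|M_{\mathcal{S}_k}X_{\mathcal{R}}\bm{\beta}_{\mathcal{R}}\|^2$. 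Expanding this inner product as $\sum_{j\in\mathcal{R}}\beta_j\,X_j^\top M_{\mathcal{S}_k}Y$ (using $X_j^\top M_{\mathcal{S}_k}Y=(M_{\mathcal{S}_k}X_j)^\top Z$ for $j\in\mathcal{T}$) and applying Cauchy--Schwarz gives $\max_{j\in\mathcal{R}}|X_j^\top M_{\mathcal{S}_k}Y|\ge\tfrac12 n\tau_{\min}\beta_{\min}$; since $\|M_{\mathcal{S}_k}X_i\|^2\in[n\tau_{\min},n\tau_{\max}]$ for \emph{every} $i\in\mathcal{S}_k^c$, the corresponding lower bounds, up to fixed powers of $\tau_{\min}/\tau_{\max}$, also hold for the one-step reduction $(X_i^\top M_{\mathcal{S}_k}Y)^2/\|M_{\mathcal{S}_k}X_i\|^2$ and for $|\hat\beta_i|=|X_i^\top M_{\mathcal{S}_k}Y|/\|M_{\mathcal{S}_k}X_i\|^2$ at the maximising relevant index. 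Consequently, under \emph{any} of Screening Criteria~1--3 the top-$a_{k+1}$ rule forces $\mathcal{A}_{k+1}$ to contain some index $i$ with $\mathrm{RSS}_k-\|M_{\mathcal{S}_k\cup\{i\}}Y\|^2\ge n\tau_{\min}^3\beta_{\min}^2/(4\tau_{\max}^2)$, whence $\|M_{\mathcal{S}_k\cup\mathcal{A}_{k+1}}Y\|^2\le\mathrm{RSS}_k-n\tau_{\min}^3\beta_{\min}^2/(4\tau_{\max}^2)$. Now optimality of the Selection-Criterion-3 minimiser $\hat{\bm{\beta}}$, supported on $\mathcal{S}_{k+1}$, together with the fact that the SCAD penalty is pointwise dominated by the LASSO penalty, gives for $\tilde{\bm{\beta}}$ the least-squares fit of $Y$ on $X_{\mathcal{S}_k\cup\mathcal{A}_{k+1}}$ the bound $\mathrm{RSS}_{k+1}\le\|M_{\mathcal{S}_k\cup\mathcal{A}_{k+1}}Y\|^2+n\lambda_n\|\tilde{\bm{\beta}}\|_1$, and on $\mathcal{E}$ one has $\|\tilde{\bm{\beta}}\|_1\le\sqrt{s^*}\,\|\tilde{\bm{\beta}}\|_2\le\sqrt{s^*}\sqrt{2\sigma_y^2/\tau_{\min}}$. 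The ceiling imposed on $\lambda_n$ in the statement is precisely what makes $n\lambda_n\|\tilde{\bm{\beta}}\|_1\le\tfrac12\,n\tau_{\min}^3\beta_{\min}^2/(4\tau_{\max}^2)$, so on $\mathcal{E}$, whenever $\mathcal{T}\not\subset\mathcal{S}_k$, $\mathrm{RSS}_{k+1}\le\mathrm{RSS}_k-n\tau_{\min}^3\beta_{\min}^2/(8\tau_{\max}^2)$. Iterating from $\mathrm{RSS}_1\le\|Y\|^2<2c_y n^{1+\xi_y}$, if $\mathcal{T}\not\subset\mathcal{S}_k$ held for all $k\le\kappa$ we would get $\mathrm{RSS}_\kappa\le 2c_y n^{1+\xi_y}-(\kappa-1)\tau_{\min}^3 c_\beta^2 n^{1-2\xi_\beta}/(8\tau_{\max}^2)$, which is negative once $\kappa\ge 2c_\kappa n^{\xi_y+2\xi_\beta}$ with $c_\kappa=8c_y\tau_{\max}^3/(c_\beta^2\tau_{\min}^4)$ --- a contradiction. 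Hence on $\mathcal{E}$ there is some $k\le\kappa$ with $\mathcal{T}\subset\mathcal{S}_k$, and $P(\mathcal{E})\ge 1-O(\exp(-cn^{\xi}))$ finishes the proof.

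\textbf{Main obstacle.} The delicate point is making the per-iteration signal-strength estimate uniform over the \emph{data-dependent} model sequence $\mathcal{S}_1,\dots,\mathcal{S}_\kappa$: at each step one must certify that the screening score of the best still-missing relevant predictor is of order $n\beta_{\min}$ despite the residual having been re-projected many times, and do so simultaneously for all three screening criteria --- this is exactly where the Schur-complement eigenvalue bound and the union-over-subsets error tail of the good event are both indispensable. The second subtlety, peculiar to Selection Criterion~3, is that drop-out makes it impossible to control the penalty cost ``after the fact'' model by model; it has to be dominated in advance over \emph{every} conceivable intermediate model of size $\le s^*$, which is the sole purpose of the explicit ceiling placed on $\lambda_n$, and it is the competition between that ceiling and the per-step signal $n\tau_{\min}^3\beta_{\min}^2/\tau_{\max}^2$ that determines the admissible range of $\lambda_n$ displayed in the statement.
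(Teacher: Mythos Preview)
Your proposal is correct and follows essentially the same route as the paper: define a high-probability good event combining the eigenvalue control (A2), the response bound (A4), and a union-over-subsets sub-Gaussian tail for the noise; then argue by contradiction that if $\mathcal{T}\not\subset\mathcal{S}_k$ for all $k\le\kappa$, each iteration forces a fixed $\gtrsim n^{1-2\xi_\beta}$ drop in a nonnegative quantity bounded above by $\|Y\|^2$, which is impossible after $\kappa\gtrsim n^{\xi_y+2\xi_\beta}$ steps. The only cosmetic differences are that the paper tracks the PLS objective $\rho_k$ (with $\rho_k\ge\|M_{\mathcal{S}_k}Y\|^2$) rather than $\mathrm{RSS}_k$ directly, packages your inner-product signal bound as Proposition~\ref{prop:lb1}, and plugs in the OLS fit on $\mathcal{S}_k\cup\{i\}$ (a single index) rather than on all of $\mathcal{S}_k\cup\mathcal{A}_{k+1}$ when bounding the penalty term --- both choices give the same $\sqrt{s_\kappa^*\,\|Y\|^2/(n\tau_{\min})}$ bound on the $\ell_1$ norm and hence the same ceiling on $\lambda_n$.
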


\begin{rmk}
Theorem \ref{thm:sel3} indicates that both Van-ISIS and Van-ISIS-R could identify the true model at least once in the first $\kappa$ iterations. Notice that when applying the Selection Criterion 3, if we obtain $\mathcal{S}_k=\mathcal{S}_{k+1}$ for some $k$, then we have  $\mathcal{S}_k=\mathcal{S}_{k+j}$ for any $j>0$. This is the reason to terminate the Van-ISIS and Van-ISIS-R algorithms in real data analysis when $\mathcal{S}_k=\mathcal{S}_{k+1}$ is achieved. Intuitively, if one could find a consistent variable selection method with $P({\mathcal{M}}=\mathcal{T})$ converging to $1$ exponentially fast, where ${\mathcal{M}}$ denotes the submodel selected from any model that covers the true model, then one can show that
$\mathcal{T}\subset \mathcal{S}_{\kappa}$ with an overwhelming probability in the third type of algorithms.
\end{rmk}

\section{Technical details} In this section, we introduce several necessary results for the proof of our main theorems, and their proofs can be referred to the appendix. Based on these results, we then prove the sure screening properties of the three types of iterative screening methods.
\subsection{Preliminary results}
Initially, we establish a lower bound for the reduction of RSS in terms of the rescaled marginal correlations between predictors and the residual. 
\begin{prop}\label{prop:rsslb}
For any disjoint index sets $\mathcal S$ and $\mathcal A$, denote $X_{\mathcal S\cup \mathcal A}=[X_{\mathcal S}, X_{\mathcal A}]$. If $X_{\mathcal{S}\cup \mathcal A}$ is of full column rank, then we have
\[||{M}_{\mathcal S}{Y}||^2-||{M}_{\mathcal S\cup \mathcal A}{Y}||^2 \geq \sum_{i\in \mathcal A}({X}^\top _i{M}_{\mathcal S}{Y})^2/\lambda_{\text{max}}({X}^\top _{\mathcal A}{M}_{\mathcal S}{X}_{\mathcal A}).\]
\end{prop}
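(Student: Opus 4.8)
The plan is to decompose the reduction in RSS using the nested projection structure and then bound the resulting quadratic form from below by a Rayleigh-quotient argument. First I would write $M_{\mathcal S\cup\mathcal A} = M_{\mathcal S} - P$, where $P$ is the orthogonal projection onto the subspace $M_{\mathcal S}\,C(X_{\mathcal A})$ (that is, the part of $C(X_{\mathcal A})$ orthogonal to $C(X_{\mathcal S})$); this is the standard additive decomposition of projections onto nested column spaces, valid because $X_{\mathcal S\cup\mathcal A}$ has full column rank. Consequently
\[
\|M_{\mathcal S}Y\|^2 - \|M_{\mathcal S\cup\mathcal A}Y\|^2 = Y^\top P Y = \|P Y\|^2.
\]
Letting $W = M_{\mathcal S}X_{\mathcal A}$, the projection onto its column space is $P = W(W^\top W)^{-1}W^\top$, so $Y^\top P Y = (W^\top Y)^\top (W^\top W)^{-1}(W^\top Y)$. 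Note $W^\top Y = X_{\mathcal A}^\top M_{\mathcal S}Y$ (using $M_{\mathcal S}^\top M_{\mathcal S} = M_{\mathcal S}$) and $W^\top W = X_{\mathcal A}^\top M_{\mathcal S}X_{\mathcal A}$, so with $u := X_{\mathcal A}^\top M_{\mathcal S}Y$ the reduction equals exactly $u^\top (X_{\mathcal A}^\top M_{\mathcal S}X_{\mathcal A})^{-1} u$.

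Next I would lower-bound this quadratic form. Since $X_{\mathcal A}^\top M_{\mathcal S}X_{\mathcal A}$ is symmetric positive definite, its inverse has smallest eigenvalue $1/\lambda_{\max}(X_{\mathcal A}^\top M_{\mathcal S}X_{\mathcal A})$, hence
\[
u^\top (X_{\mathcal A}^\top M_{\mathcal S}X_{\mathcal A})^{-1} u \;\ge\; \frac{\|u\|^2}{\lambda_{\max}(X_{\mathcal A}^\top M_{\mathcal S}X_{\mathcal A})}.
\]
Finally, $\|u\|^2 = \sum_{i\in\mathcal A}(X_i^\top M_{\mathcal S}Y)^2$ since the $i$-th coordinate of $u = X_{\mathcal A}^\top M_{\mathcal S}Y$ is exactly $X_i^\top M_{\mathcal S}Y$. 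Combining the three displays gives the claimed bound.

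There is no real obstacle here; the only points requiring a line of care are justifying the projection decomposition $M_{\mathcal S\cup\mathcal A} = M_{\mathcal S} - W(W^\top W)^{-1}W^\top$ (which needs $W = M_{\mathcal S}X_{\mathcal A}$ to have full column rank — a consequence of the full-rank hypothesis on $X_{\mathcal S\cup\mathcal A}$, since any column of $X_{\mathcal A}$ lying in $C(X_{\mathcal S})$ would create a linear dependence) and the eigenvalue step, which is just the Rayleigh quotient bound $v^\top A^{-1}v \ge \|v\|^2/\lambda_{\max}(A)$ for $A \succ 0$. Everything else is bookkeeping with transposes of projection matrices.
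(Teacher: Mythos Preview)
Your proof is correct and follows essentially the same route as the paper: both establish the exact identity $\|M_{\mathcal S}Y\|^2-\|M_{\mathcal S\cup\mathcal A}Y\|^2 = (X_{\mathcal A}^\top M_{\mathcal S}Y)^\top (X_{\mathcal A}^\top M_{\mathcal S}X_{\mathcal A})^{-1}(X_{\mathcal A}^\top M_{\mathcal S}Y)$ and then apply the Rayleigh-quotient lower bound. The only cosmetic difference is that the paper derives the projection identity $H_{\mathcal S\cup\mathcal A}-H_{\mathcal S}=M_{\mathcal S}X_{\mathcal A}(X_{\mathcal A}^\top M_{\mathcal S}X_{\mathcal A})^{-1}X_{\mathcal A}^\top M_{\mathcal S}$ via the blockwise inverse formula, whereas you obtain it directly from the nested-projection decomposition with $W=M_{\mathcal S}X_{\mathcal A}$.
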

\begin{cor}\label{cor:2to1}
For any index set $\mathcal S$ and any index $i\not\in \mathcal S$, denote $X_{\mathcal S\cup \{i\}}=[X_{\mathcal S}, X_i]$. If $X_{\mathcal S\cup \{i\}}$ is of full column rank, we have
\[||{M}_{\mathcal S}{Y}||^2-||{M}_{\mathcal S\cup \{i\}}{Y}||^2 = ({X}^\top _i{M}_{\mathcal S}{Y})^2/||X_i^\top {M}_{\mathcal S}||^2.\]
\end{cor}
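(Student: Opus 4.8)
The plan is to recognize this as the special case $\mathcal A=\{i\}$ of Proposition \ref{prop:rsslb}, in which the matrix $X_{\mathcal A}^\top M_{\mathcal S}X_{\mathcal A}$ collapses to the scalar $X_i^\top M_{\mathcal S}X_i$. Since $M_{\mathcal S}$ is symmetric and idempotent, $X_i^\top M_{\mathcal S}X_i=X_i^\top M_{\mathcal S}M_{\mathcal S}X_i=\|M_{\mathcal S}X_i\|^2=\|X_i^\top M_{\mathcal S}\|^2$, so Proposition \ref{prop:rsslb} already delivers the inequality ``$\geq$''. It therefore remains only to upgrade this to an equality, which I would do by a direct computation rather than by re-running the argument of Proposition \ref{prop:rsslb}.

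First I would write the RSS reduction as a quadratic form in the difference of projection matrices:
\[
\|M_{\mathcal S}Y\|^2-\|M_{\mathcal S\cup\{i\}}Y\|^2
= Y^\top\!\big(M_{\mathcal S}-M_{\mathcal S\cup\{i\}}\big)Y
= Y^\top\!\big(H_{\mathcal S\cup\{i\}}-H_{\mathcal S}\big)Y,
\]
using $M_{\mathcal S}=I_n-H_{\mathcal S}$. The key step is the rank-one update formula for the projection when a single column is appended: decomposing $X_i=H_{\mathcal S}X_i+M_{\mathcal S}X_i$, the space $C(X_{\mathcal S\cup\{i\}})$ is the orthogonal direct sum of $C(X_{\mathcal S})$ and the line spanned by $M_{\mathcal S}X_i$, where $M_{\mathcal S}X_i\neq 0$ because $X_{\mathcal S\cup\{i\}}$ has full column rank. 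Hence
\[
H_{\mathcal S\cup\{i\}}=H_{\mathcal S}+\frac{(M_{\mathcal S}X_i)(M_{\mathcal S}X_i)^\top}{\|M_{\mathcal S}X_i\|^2},
\]
which I would justify either by this orthogonal decomposition or by checking that the right-hand side is symmetric, idempotent, fixes every column of $X_{\mathcal S\cup\{i\}}$, and annihilates $C(X_{\mathcal S\cup\{i\}})^\perp$.

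Substituting this identity gives
\[
Y^\top\!\big(H_{\mathcal S\cup\{i\}}-H_{\mathcal S}\big)Y
=\frac{(X_i^\top M_{\mathcal S}Y)^2}{\|M_{\mathcal S}X_i\|^2}
=\frac{(X_i^\top M_{\mathcal S}Y)^2}{\|X_i^\top M_{\mathcal S}\|^2},
\]
again using symmetry and idempotence of $M_{\mathcal S}$ for the last equality, which is exactly the claimed identity. There is essentially no genuine obstacle here: the whole content is the standard ``adding one regressor'' projection update, and the only point requiring a line of care is verifying that formula (or, equivalently, that the one-dimensional increment of the column space is precisely $\mathrm{span}\{M_{\mathcal S}X_i\}$).
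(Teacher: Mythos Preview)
Your argument is correct and is essentially the paper's approach: the paper simply invokes the \emph{equality} established inside the proof of Proposition~\ref{prop:rsslb} (namely $\|M_{\mathcal S}Y\|^2-\|M_{\mathcal S\cup\mathcal A}Y\|^2=Y^\top M_{\mathcal S}X_{\mathcal A}(X_{\mathcal A}^\top M_{\mathcal S}X_{\mathcal A})^{-1}X_{\mathcal A}^\top M_{\mathcal S}Y$, derived there via the blockwise inverse) and sets $\mathcal A=\{i\}$. Your rank-one projection update is exactly that identity specialized to a single column, so there is no separate ``upgrade to equality'' step needed---the equality is already available from the proof of the proposition, not merely the inequality in its statement.
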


Corollary \ref{cor:2to1} indicates that choosing the smallest $||M_{\mathcal{S}_k\cup\{i\}}Y||^2$ is equivalent to finding the largest $({X}^\top _i{M}_{\mathcal{S}_k}{Y})^2/||X_i^\top {M}_{\mathcal S_k}||^2$ in the Screening Criterion 2. Moreover, we could also establish the following relationship between $\hat\beta_i$ evaluated in the Screening Criterion 3 and the marginal correlation.
\begin{prop}\label{prop:3to1}
For any index set $\mathcal S$ and any index $i\not\in \mathcal S$, we compute $\hat\beta_i$ as
\[(\hat{\bm{\beta}}_{\mathcal S},\hat\beta_i)=\argmin_{{\bm{\beta}}_{\mathcal S}\in\mathbb{R}^{|\mathcal S|},\beta\in\mathbb{R}}\big|\big|Y-X_{\mathcal S}^\top \bm{\beta}_{\mathcal S}-X_i\beta\big|\big|^2.\]
If $X_{\mathcal S\cup \{i\}}=[X_{\mathcal S},X_i]$ is of full column rank, we have
\[\hat\beta_i^2=({X}^\top _i{M}_{\mathcal S}{Y})^2/||X_i^\top {M}_{\mathcal S}||^4.\]
\end{prop}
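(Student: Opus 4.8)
The plan is to identify $\hat\beta_i$ with the ordinary least squares coefficient on the single column $X_i$ in the regression of $Y$ on the augmented design $[X_{\mathcal S},X_i]$, and then collapse that regression to a one-dimensional problem by partialling out $X_{\mathcal S}$. Concretely, I would first invoke the Frisch--Waugh--Lovell identity: minimizing $\|Y-X_{\mathcal S}\bm{\beta}_{\mathcal S}-X_i\beta\|^2$ jointly over $(\bm{\beta}_{\mathcal S},\beta)$ produces the same value of $\hat\beta_i$ as minimizing $\|M_{\mathcal S}Y-(M_{\mathcal S}X_i)\beta\|^2$ over $\beta\in\mathbb R$. Full column rank of $[X_{\mathcal S},X_i]$ guarantees $M_{\mathcal S}X_i\neq 0$, so this univariate least squares problem has the unique minimizer $\hat\beta_i=(M_{\mathcal S}X_i)^\top(M_{\mathcal S}Y)/\|M_{\mathcal S}X_i\|^2$.

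The second step is purely to simplify this expression using that $M_{\mathcal S}$ is symmetric and idempotent: $(M_{\mathcal S}X_i)^\top(M_{\mathcal S}Y)=X_i^\top M_{\mathcal S}^\top M_{\mathcal S}Y=X_i^\top M_{\mathcal S}Y$, and likewise $\|M_{\mathcal S}X_i\|^2=X_i^\top M_{\mathcal S}X_i=X_i^\top M_{\mathcal S}M_{\mathcal S}^\top X_i=\|X_i^\top M_{\mathcal S}\|^2$. Hence $\hat\beta_i=X_i^\top M_{\mathcal S}Y/\|X_i^\top M_{\mathcal S}\|^2$, and squaring yields the claimed identity $\hat\beta_i^2=(X_i^\top M_{\mathcal S}Y)^2/\|X_i^\top M_{\mathcal S}\|^4$.

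An alternative route, if one prefers not to appeal to Frisch--Waugh--Lovell, is to combine Corollary~\ref{cor:2to1} with the observation that adding the orthogonalized regressor $M_{\mathcal S}X_i$ decreases the residual sum of squares by exactly $\hat\beta_i^2\|M_{\mathcal S}X_i\|^2$, this being the squared length of the projection of $M_{\mathcal S}Y$ onto the line spanned by $M_{\mathcal S}X_i$; equating the two expressions for $\|M_{\mathcal S}Y\|^2-\|M_{\mathcal S\cup\{i\}}Y\|^2$ and cancelling $\|M_{\mathcal S}X_i\|^2=\|X_i^\top M_{\mathcal S}\|^2$ again gives the result. Either way there is essentially no obstacle: the only points deserving a word of care are that the full-rank hypothesis is precisely what makes the denominator nonzero, so $\hat\beta_i$ is well defined, and the elementary bookkeeping with the symmetry and idempotency of $M_{\mathcal S}$; no concentration or high-dimensional arguments enter this proposition.
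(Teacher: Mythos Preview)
Your proposal is correct. The paper arrives at the same identity $\hat\beta_i=(X_i^\top M_{\mathcal S}X_i)^{-1}X_i^\top M_{\mathcal S}Y$ but does so by writing $\hat\beta_i$ as the last entry of the full OLS estimate $(X_{\mathcal S\cup\{i\}}^\top X_{\mathcal S\cup\{i\}})^{-1}X_{\mathcal S\cup\{i\}}^\top Y$ and extracting that entry via the blockwise inverse formula, rather than invoking Frisch--Waugh--Lovell as a named result. In effect the paper re-derives the relevant piece of Frisch--Waugh--Lovell by hand; your route is shorter and more conceptual, while the paper's is self-contained and does not rely on the reader knowing that theorem. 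Your remarks about the full-rank hypothesis ensuring $M_{\mathcal S}X_i\neq 0$ and about the idempotency/symmetry bookkeeping are exactly the right points, and your alternative route through Corollary~\ref{cor:2to1} would also work.
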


Proposition \ref{prop:3to1} further implies that choosing the largest $|\hat\beta_i|$ in the Screening Criterion 3 is equivalent to selecting the largest $({X}^\top _i{M}_{\mathcal S_k}{Y})^2/||X_i^\top {M}_{\mathcal S_k}||^4$. From Corollary \ref{cor:2to1} and Proposition \ref{prop:3to1}, we see that the Screening Criteria 1-3 can be applied through evaluating scaled terms of $({X}^\top _i{M}_{\mathcal S_k}{Y})^2$. In the following proposition, we establish a lower bound of the maximum value of $({X}^\top _i{M}_{\mathcal S_k}{Y})^2$ for unidentified relevant predictors.

\begin{prop}\label{prop:lb1}
For any index set $\mathcal S$ and the true model $\mathcal{T}$, denote $\mathcal{T}_{\mathcal S}^c=\mathcal{T}-\mathcal S$ and $\mathcal S^*= \mathcal S\cup \mathcal T$. Then, if $\mathcal {T}_{\mathcal S}^c$ is non-empty and ${X}_{\mathcal S^*}=[{X}_{\mathcal S}, {X}_{{\mathcal T}_{\mathcal S}^c}]$ is of full column rank, we have
\[\max\limits_{i\in {\mathcal T}_{\mathcal S}^c}({X}^\top _i{M}_{\mathcal S}{Y})^2\geq \frac{\beta^2_{\text{min}}}{2}\cdot\lambda^2_{\text{min}}({X}^\top _{\mathcal S^*}{X}_{\mathcal S^*})
-\max\limits_{i\in {\mathcal T}_{\mathcal S}^c}({X}^\top _{i}{X}_{i})\cdot\max\limits_{i\in {\mathcal T}_{\mathcal S}^c}\left(\frac{{X}^\top _i{M}_{\mathcal S}{\bm\epsilon}}{||{X}^\top _i{M}_{\mathcal S}||}\right)^2.\]
\end{prop}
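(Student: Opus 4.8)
The plan is to bound $\max_{i\in\mathcal{T}_{\mathcal{S}}^c}(X_i^\top M_{\mathcal{S}}Y)^2$ from below by the \emph{average} of these quantities over $i\in\mathcal{T}_{\mathcal{S}}^c$, i.e.\ by $|\mathcal{T}_{\mathcal{S}}^c|^{-1}\|X_{\mathcal{T}_{\mathcal{S}}^c}^\top M_{\mathcal{S}}Y\|^2$, and then to lower-bound this norm by separating signal from noise. Since $Y=X_{\mathcal{T}}\bm{\beta}_{\mathcal{T}}+\bm{\epsilon}$ and $M_{\mathcal{S}}X_j=0$ for every $j\in\mathcal{S}$, only the relevant predictors outside $\mathcal{S}$ survive, so $M_{\mathcal{S}}Y=M_{\mathcal{S}}X_{\mathcal{T}_{\mathcal{S}}^c}\bm{\beta}_{\mathcal{T}_{\mathcal{S}}^c}+M_{\mathcal{S}}\bm{\epsilon}$. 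Writing $A=X_{\mathcal{T}_{\mathcal{S}}^c}^\top M_{\mathcal{S}}X_{\mathcal{T}_{\mathcal{S}}^c}$ and $\bm{b}=X_{\mathcal{T}_{\mathcal{S}}^c}^\top M_{\mathcal{S}}\bm{\epsilon}$, this gives $X_{\mathcal{T}_{\mathcal{S}}^c}^\top M_{\mathcal{S}}Y=A\bm{\beta}_{\mathcal{T}_{\mathcal{S}}^c}+\bm{b}$, and I would apply the elementary inequality $\|u+v\|^2\geq\frac{1}{2}\|u\|^2-\|v\|^2$ to obtain $\|X_{\mathcal{T}_{\mathcal{S}}^c}^\top M_{\mathcal{S}}Y\|^2\geq\frac{1}{2}\|A\bm{\beta}_{\mathcal{T}_{\mathcal{S}}^c}\|^2-\|\bm{b}\|^2$.

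For the signal term, the key fact is that $A$ is the Schur complement of $X_{\mathcal{S}}^\top X_{\mathcal{S}}$ in $X_{\mathcal{S}^*}^\top X_{\mathcal{S}^*}$ (recall $\mathcal{S}^*=\mathcal{S}\,\dot\cup\,\mathcal{T}_{\mathcal{S}}^c$), whence $\lambda_{\text{min}}(A)\geq\lambda_{\text{min}}(X_{\mathcal{S}^*}^\top X_{\mathcal{S}^*})$. I would prove this directly: for any vector $v$ indexed by $\mathcal{T}_{\mathcal{S}}^c$, the projection identity gives $M_{\mathcal{S}}X_{\mathcal{T}_{\mathcal{S}}^c}v=X_{\mathcal{T}_{\mathcal{S}}^c}v-X_{\mathcal{S}}w$ for the corresponding least-squares vector $w$, so that $v^\top Av=\|M_{\mathcal{S}}X_{\mathcal{T}_{\mathcal{S}}^c}v\|^2=\|X_{\mathcal{S}^*}u\|^2$ with $u$ the concatenation of $-w$ (on $\mathcal{S}$) and $v$ (on $\mathcal{T}_{\mathcal{S}}^c$), and hence $v^\top Av\geq\lambda_{\text{min}}(X_{\mathcal{S}^*}^\top X_{\mathcal{S}^*})(\|w\|^2+\|v\|^2)\geq\lambda_{\text{min}}(X_{\mathcal{S}^*}^\top X_{\mathcal{S}^*})\|v\|^2$. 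Combined with $\|\bm{\beta}_{\mathcal{T}_{\mathcal{S}}^c}\|^2=\sum_{i\in\mathcal{T}_{\mathcal{S}}^c}\beta_i^2\geq|\mathcal{T}_{\mathcal{S}}^c|\,\beta_{\text{min}}^2$, this yields $\|A\bm{\beta}_{\mathcal{T}_{\mathcal{S}}^c}\|^2\geq\lambda_{\text{min}}^2(X_{\mathcal{S}^*}^\top X_{\mathcal{S}^*})\,|\mathcal{T}_{\mathcal{S}}^c|\,\beta_{\text{min}}^2$. For the noise term, I would write $\|\bm{b}\|^2=\sum_{i\in\mathcal{T}_{\mathcal{S}}^c}\|X_i^\top M_{\mathcal{S}}\|^2\big(X_i^\top M_{\mathcal{S}}\bm{\epsilon}/\|X_i^\top M_{\mathcal{S}}\|\big)^2$ and use $\|X_i^\top M_{\mathcal{S}}\|^2=X_i^\top M_{\mathcal{S}}X_i\leq X_i^\top X_i$ (valid since $0\preceq M_{\mathcal{S}}\preceq I_n$) to bound this by $|\mathcal{T}_{\mathcal{S}}^c|\max_{i\in\mathcal{T}_{\mathcal{S}}^c}(X_i^\top X_i)\max_{i\in\mathcal{T}_{\mathcal{S}}^c}\big(X_i^\top M_{\mathcal{S}}\bm{\epsilon}/\|X_i^\top M_{\mathcal{S}}\|\big)^2$.

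Finally I would assemble the pieces: $\max_{i\in\mathcal{T}_{\mathcal{S}}^c}(X_i^\top M_{\mathcal{S}}Y)^2\geq|\mathcal{T}_{\mathcal{S}}^c|^{-1}\|X_{\mathcal{T}_{\mathcal{S}}^c}^\top M_{\mathcal{S}}Y\|^2\geq|\mathcal{T}_{\mathcal{S}}^c|^{-1}\big(\frac{1}{2}\lambda_{\text{min}}^2(X_{\mathcal{S}^*}^\top X_{\mathcal{S}^*})\,|\mathcal{T}_{\mathcal{S}}^c|\,\beta_{\text{min}}^2-\|\bm{b}\|^2\big)$, and after dividing by $|\mathcal{T}_{\mathcal{S}}^c|$ all such factors cancel, leaving exactly the claimed inequality. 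The full-column-rank hypothesis on $X_{\mathcal{S}^*}$ is used to guarantee that $X_{\mathcal{S}^*}^\top X_{\mathcal{S}^*}$ (hence its principal Schur complement $A$) is positive definite and that the least-squares vector $w$ and all the scaled quantities above are well defined. I expect the Schur-complement eigenvalue comparison to be the only genuinely substantive step; the remainder is careful bookkeeping of the $|\mathcal{T}_{\mathcal{S}}^c|$ factors, which conveniently cancel between the signal and noise bounds.
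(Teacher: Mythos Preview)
Your proposal is correct and follows essentially the same approach as the paper: both use the max $\geq$ average step, the decomposition $M_{\mathcal S}Y=M_{\mathcal S}X_{\mathcal T_{\mathcal S}^c}\bm{\beta}_{\mathcal T_{\mathcal S}^c}+M_{\mathcal S}\bm{\epsilon}$, the projection identity $M_{\mathcal S}X_{\mathcal T_{\mathcal S}^c}v=X_{\mathcal S^*}(-w,v)^\top$ to compare with $\lambda_{\min}(X_{\mathcal S^*}^\top X_{\mathcal S^*})$, and the elementary inequality $\|u+v\|^2\geq\frac{1}{2}\|u\|^2-\|v\|^2$. The only cosmetic differences are that the paper applies the signal/noise split at the scalar (max) level before averaging the signal term, and bounds $\|A\bm{\beta}_{\mathcal T_{\mathcal S}^c}\|^2$ via Cauchy--Schwarz ($\|A\bm{\beta}\|^2\geq(\bm{\beta}^\top A\bm{\beta})^2/\|\bm{\beta}\|^2$) rather than your slightly cleaner direct eigenvalue bound $\|A\bm{\beta}\|\geq\lambda_{\min}(A)\|\bm{\beta}\|$.
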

\subsection{Proof of main theorems}
\begin{proof}[Proof of Theorem \ref{thm:sel1}]
Recall that when applying the Selection Criterion 1, model $\mathcal{S}_{k+1}$ is determined as $\mathcal{S}_{k+1}=\mathcal{S}_{k}\cup\mathcal{A}_{k+1}$. Let $\kappa=c_\kappa n^{\xi_y+2\xi_\beta}$ with $c_\kappa=8c_y\tau^3_{\text{max}}/(c_\beta^2\tau_{\text{min}}^4)$. Without loss of generality, we assume that $\kappa$ is an integer. Denoting $\mathcal{S}_{\kappa}^*=\mathcal{S}_{\kappa}\cup\mathcal{T}$, under the assumption that $t\leq c_tn^{\xi_t}$ and $\sum_{k=1}^\kappa a_k\leq c_sn^{\xi_s}$, we have
\[|\mathcal{S}_{\kappa}^*|\leq t + \sum_{k=1}^\kappa a_k\leq s_{\kappa}^* = 2C_s^*n^{\xi_s^*},\]
where $C_s^*=c_t\vee c_s$ and $\xi_s^*=\xi_t\vee \xi_s$. Moreover, we notice that $\sum_{k=1}^\kappa a_k\geq \kappa$, indicating that $\xi_s\geq \xi_y+2\xi_\beta>\xi_\beta$.

Let $\mathcal{E}$ denote the event that $\mathcal{T}\not\subset\mathcal{S}_{\kappa}$. To achieve the sure screening property, it is sufficient to prove that $\lim_{n\rightarrow \infty}P(\mathcal{E})=0$. Moreover, define the event $\mathcal{E}_\lambda$ as
\[\mathcal{E}_\lambda = \left\{\tau_{\text{min}}\leq \min\limits_{|\mathcal{S}|\leq s_{\kappa}^*}\lambda_{\text{min}}\{\hat{{\Sigma}}_{(\mathcal{S})}\}\leq \max\limits_{|\mathcal{S}|\leq s_{\kappa}^*}\lambda_{\text{max}}\{\hat{{\Sigma}}_{(\mathcal{S})}\}\leq\tau_{\text{max}}\right\}.\]
Thus, according to assumption (A2) with $\xi_p+3\xi_s^*<1$, there exists some positive constant $c_s^*$, such that
\begin{equation}\label{eq:elambda}
P\left(\mathcal{E}_\lambda^c\right)\leq O\left(\exp\left(-c_s^* n^{1-2\xi_s^*}\right)\right),
\end{equation}
where $\mathcal{E}_\lambda^c$ denotes the complement of event $\mathcal{E}_\lambda$.

Finally, for $\xi_\epsilon=\xi_p+3\xi_s^*-2\xi_{\beta}$, we consider the event 
\[\mathcal{E}_\epsilon=\left\{\max_{\substack{i\in \mathcal{T}\\ \mathcal S\not\ni i,\, |\mathcal S|< s_{\kappa}^*}}\left(\frac{{X}^\top _i{M}_{\mathcal S}\bm{\epsilon}}{||{X}^\top _i{M}_{\mathcal S}||}\right)^2< n^{\xi_\epsilon}\right\}.\]
Notice that when the event $\mathcal{E}_\lambda$ holds, for any $i\in \mathcal{T}$ and any $\mathcal S\not\ni i$ with $|\mathcal S|<s_{\kappa}^*$, $X_{\mathcal S}$ is of full column rank and  
\begin{align}
||{X}^\top _i{M}_{\mathcal S}||^2&=||[I_n-X_{\mathcal S}(X_{\mathcal S}^\top X_{\mathcal S})^{-1}X_{\mathcal S}^\top ]X_i||^2\nonumber\\
& = ||[X_i,X_{\mathcal S}][1,X_i^\top X_{\mathcal S}(X_{\mathcal S}^\top X_{\mathcal S})^{-1}]^\top ||^2\nonumber\\
& \geq\lambda_{\min}(X_{\mathcal S\cup\{i\}}^\top X_{\mathcal S\cup\{i\}})\cdot(1+||X_i^\top X_{\mathcal S}(X_{\mathcal S}^\top X_{\mathcal S})^{-1}||^2)\nonumber\\
& \geq n\tau_{\min}>0,\label{eq:xmlb}
\end{align}
where $X_{\mathcal S\cup\{i\}}=[X_{\mathcal S},X_i]$. Therefore, ${X}^\top _i{M}_{\mathcal S}/||{X}^\top _i{M}_{\mathcal S}||$ is well defined and has the unit $L_2$ norm. Consequently, by Proposition \ref{prop:tail} and assumption (A1), we have
\begin{align*}
P\left(\mathcal{E}_\epsilon^c\cap\mathcal{E}_\lambda\right)&\leq\sum_{i\in \mathcal{T}}\sum_{{\mathcal S}\not\ni i,\, |\mathcal S|< s_{\kappa}^*}P\left(\left\{\left(\frac{{X}^\top _i{M}_{\mathcal S}\bm{\epsilon}}{||{X}^\top _i{M}_{\mathcal S}||}\right)^2\geq n^{\xi_\epsilon}\right\}\bigcap\mathcal{E}_\lambda\right)\\
&\leq t\cdot p^{s_{\kappa}^*}\cdot O\left(\exp\left(-c_\epsilon n^{\xi_\epsilon}\right)\right)\\
&\leq O\left(n^{\xi_t}\cdot\exp\left(2C_s^*c_pn^{\xi_p+\xi_s^*}-c_\epsilon n^{\xi_\epsilon}\right)\right)\\
&\leq O\left(\exp\left(-c_\epsilon n^{\xi_\epsilon}/2\right)\right),
\end{align*}
where $\xi_p+\xi_s^*<\xi_\epsilon=\xi_p+3\xi_s^*-2\xi_{\beta}$ since $\xi_s^*>\xi_{\beta}$. Therefore, from the inequality \eqref{eq:elambda}, we obtain that
\begin{align}
P\left(\mathcal{E}_\epsilon^c\right)&\leq P\left(\mathcal{E}_\epsilon^c\cap\mathcal{E}_\lambda\right)+ P\left(\mathcal{E}_\lambda^c\right)\nonumber\\
&\leq O\left(\exp\left(-c_\epsilon n^{\xi_\epsilon}/2\right)\right)+O\left(\exp\left(-c_s^* n^{1-2\xi_s^*}\right)\right).\label{eq:eepsilon}
\end{align}

Denoting $\mathcal S_{0}=\emptyset$ and $M_{\mathcal S_{0}}=I_n$, for $k\geq 1$, we define
\[\rho_k=\min_{\bm{\beta}\in\mathbb{R}^{{\scriptstyle s}_{\scalebox{.7}{$\scriptscriptstyle k-1$}}}}||Y-X_{\mathcal{S}_{k-1}}\bm{\beta}||^2= ||M_{\mathcal{S}_{k-1}}Y||^2.\]

In the next, we will prove the uniform lower bound for $\rho_k-\rho_{k+1}$ for $1\leq k\leq \kappa$ under events $\mathcal{E}$, $\mathcal{E}_\lambda$ and $\mathcal{E}_\epsilon$. Notice that event $\mathcal{E}$ implies that $\mathcal{T}\not\subset\mathcal{S}_k$ for any $1\leq k\leq \kappa$. Thus, we have $\mathcal{T}_{{k-1}}^c=\mathcal{T}-\mathcal{S}_{k-1}\neq \emptyset$ and define
\[i_{k}^*=\argmax_{i\in \mathcal{T}_{{k-1}}^c}(X_{i}^\top M_{\mathcal{S}_{k-1}}Y)^2.\]
Denoting $\mathcal{S}_{k-1}^*=\mathcal{S}_{k-1}\cup\mathcal{T}$, from Proposition \ref{prop:lb1}, we obtain that
\begin{align*}
&~~~~(X_{i_{k}^*}^\top M_{\mathcal{S}_{k-1}}Y)^2
=\max_{i\in \mathcal{T}_{{k-1}}^c}(X_{i}^\top M_{\mathcal{S}_{k-1}}Y)^2\\
&\geq \frac{\beta^2_{\text{min}}}{2}\cdot\lambda^2_{\text{min}}({X}^\top _{\mathcal{S}_{k-1}^*}{X}_{\mathcal{S}_{k-1}^*})
-\max\limits_{i\in \mathcal{T}_{\mathcal{S}_{k-1}}^c}({X}^\top _{i}{X}_{i})\cdot\max\limits_{i\in \mathcal{T}_{\mathcal{S}_{k-1}}^c}\left(\frac{{X}^\top _iM_{\mathcal{S}_{k-1}}\bm{\epsilon}}{||{X}^\top _iM_{\mathcal{S}_{k-1}}||}\right)^2,
\end{align*}
which together with events $\mathcal{E}_\lambda$ and $\mathcal{E}_\epsilon$, imply that
\begin{align}
(X_{i_{k}^*}^\top M_{\mathcal{S}_{k-1}}Y)^2 &\geq \frac{c_\beta^2\tau_{\min}^2}{2}n^{2-2\xi_\beta}-\tau_{\max}\cdot n^{1+\xi_\epsilon}\nonumber\\
&= \frac{c_\beta^2\tau_{\min}^2}{2}n^{2-2\xi_\beta}\left[1-\frac{2\tau_{\max}}{c_\beta^2\tau_{\min}^2}n^{\xi_p+3\xi_s^*-1}\right]\nonumber\\
&\geq \frac{c_\beta^2\tau_{\min}^2}{4}n^{2-2\xi_\beta},\label{eq:marglb}
\end{align}
where the last inequality comes from the fact that $\xi_p+3\xi_s^*<1$. Then, the lower bound of $\rho_k-\rho_{k+1}$ can be evaluated separately in the $i_{k}^*\in\mathcal{A}_k$ and $i_{k}^*\not\in\mathcal{A}_k$ cases.

\textbf{Case 1:} If $i_{k}^*\in\mathcal{A}_k$, by Proposition \ref{prop:rsslb}, we have
\begin{align}
\rho_k-\rho_{k+1}&=||M_{\mathcal{S}_{k-1}}Y||^2-||M_{\mathcal{S}_{k}}Y||^2\nonumber\\
&\geq \sum_{i\in \mathcal{A}_{k}}({X}^\top _i{M}_{\mathcal{S}_{k-1}}{Y})^2/\lambda_{\text{max}}({X}^\top _{\mathcal{A}_k}M_{\mathcal{S}_{k-1}}{X}_{\mathcal{A}_k})\nonumber\\
&\geq (X_{i_{k}^*}^\top M_{\mathcal{S}_{k-1}}Y)^2/\lambda_{\text{max}}({X}^\top _{\mathcal{A}_k}{X}_{\mathcal{A}_k}).\label{eq:rholb}
\end{align}
Then by event $\mathcal{E}_\lambda$ and inequality \eqref{eq:marglb}, inequality \eqref{eq:rholb} is followed by
\begin{equation}\label{eq:lb1.1}
\rho_k-\rho_{k+1}\geq\frac{c_\beta^2\tau_{\min}^2}{4}n^{2-2\xi_\beta}/n\tau_{\max}=\frac{c_\beta^2\tau_{\min}^2}{4\tau_{\max}}n^{1-2\xi_\beta}.
\end{equation}

\textbf{Case 2:} If $i_{k}^*\not\in\mathcal{A}_k$, let $i_{k}^{**}$ denote an arbitrary element in $\mathcal{A}_k$. Under events $\mathcal{E}$, $\mathcal{E}_\lambda$ and $\mathcal{E}_\epsilon$, the lower bound of $\rho_k-\rho_{k+1}$ in algorithms applying the three different screening criteria can be evaluated as follows.

\textbf{Screening Criterion 1:} When applying the Screening Criterion 1, the set of candidate predictors is determined as
\[\mathcal{A}_{k}=\{i \in\mathcal{S}_{k-1}^c:\,\,|X_i^\top M_{\mathcal{S}_{k-1}}Y| \text{ is among the first } a_{k} \text{ largest of all}\}.\]
Consequently, with $i_{k}^*\not\in\mathcal{A}_k$ and $i_{k}^{**}\in\mathcal{A}_k$, we have
\[(X_{i_{k}^{**}}^\top M_{\mathcal{S}_{k-1}}Y)^2\geq (X_{i_{k}^*}^\top M_{\mathcal{S}_{k-1}}Y)^2.\]
Therefore, under event $\mathcal{E}_\lambda$, inequalities \eqref{eq:marglb} and \eqref{eq:rholb} imply that
\begin{align}
\rho_k-\rho_{k+1}&\geq(X_{i_{k}^{**}}^\top M_{\mathcal{S}_{k-1}}Y)^2/\lambda_{\text{max}}({X}^\top _{\mathcal{A}_k}{X}_{\mathcal{A}_k})\nonumber\\
&\geq(X_{i_{k}^*}^\top M_{\mathcal{S}_{k-1}}Y)^2/\lambda_{\text{max}}({X}^\top _{\mathcal{A}_k}{X}_{\mathcal{A}_k})\nonumber\\
&\geq \frac{c_\beta^2\tau_{\min}^2}{4\tau_{\max}}n^{1-2\xi_\beta}.\label{eq:lb1.2.1}
\end{align}

\textbf{Screening Criterion 2:} When applying the Screening Criterion 2, $\mathcal{A}_{k}$ is selected as
\[\mathcal{A}_{k}=\{i \in\mathcal{S}_{k-1}^c:\,\,||M_{\mathcal{S}_{k-1}\cup\{i\}}Y||^2 \text{ is among the first } a_{k} \text{ smallest of all}\}.\]
According to Corollary \ref{cor:2to1}, we have
\[\frac{(X_{i_{k}^{**}}^\top M_{\mathcal{S}_{k-1}}Y)^2}{||X_{i_{k}^{**}}^\top M_{\mathcal{S}_{k-1}}||^2}\geq \frac{(X_{i_{k}^{*}}^\top M_{\mathcal{S}_{k-1}}Y)^2}{||X_{i_{k}^{*}}^\top M_{\mathcal{S}_{k-1}}||^2}.\]
From event $\mathcal{E}_\lambda$ and inequality \eqref{eq:xmlb}, for any $i \in\mathcal{S}_{k-1}^c$, we have
\begin{equation}\label{xmulb}
n\tau_{\min}\leq ||X_{i}^\top M_{\mathcal{S}_{k-1}}||^2\leq X_{i}^\top X_{i}\leq n\tau_{\max}.
\end{equation}
Then we obtain that
\begin{align}
(X_{i_{k}^{**}}^\top M_{\mathcal{S}_{k-1}}Y)^2&\geq ||X_{i_{k}^{**}}^\top M_{\mathcal{S}_{k-1}}||^2\cdot\frac{(X_{i_{k}^{*}}^\top M_{\mathcal{S}_{k-1}}Y)^2}{||X_{i_{k}^{*}}^\top M_{\mathcal{S}_{k-1}}||^2}\nonumber\\
&\geq \frac{\tau_{\min}}{\tau_{\max}}\cdot(X_{i_{k}^{*}}^\top M_{\mathcal{S}_{k-1}}Y)^2.\label{eq:xmlb2}
\end{align}
Thus, from inequalities \eqref{eq:marglb}, \eqref{eq:rholb} and \eqref{eq:xmlb2}, we achieve that
\begin{align}
\rho_k-\rho_{k+1}&\geq(X_{i_{k}^{**}}^\top M_{\mathcal{S}_{k-1}}Y)^2/\lambda_{\text{max}}({X}^\top _{\mathcal{A}_k}{X}_{\mathcal{A}_k})\nonumber\\
&\geq\frac{\tau_{\min}}{\tau_{\max}}\cdot \frac{(X_{i_{k}^{*}}^\top M_{\mathcal{S}_{k-1}}Y)^2}{\lambda_{\text{max}}({X}^\top _{\mathcal{A}_k}{X}_{\mathcal{A}_k})}\geq \frac{c_\beta^2\tau_{\min}^3}{4\tau_{\max}^2}n^{1-2\xi_\beta}.\label{eq:lb1.2.2}
\end{align}

\textbf{Screening Criterion 3:} When applying the Screening Criterion 3, $\mathcal{A}_{k}$ is chosen as
\[\mathcal{A}_{k}=\{i \in\mathcal{S}_{k-1}^c:\,\,|\hat\beta_i| \text{ is among the first } a_{k} \text{ largest of all}\}.\]
According to Proposition \ref{prop:3to1}, we have
\[\frac{(X_{i_{k}^{**}}^\top M_{\mathcal{S}_{k-1}}Y)^2}{||X_{i_{k}^{**}}^\top M_{\mathcal{S}_{k-1}}||^4}\geq \frac{(X_{i_{k}^{*}}^\top M_{\mathcal{S}_{k-1}}Y)^2}{||X_{i_{k}^{*}}^\top M_{\mathcal{S}_{k-1}}||^4}.\]
Then, from inequality \eqref{xmulb}, we achieve that
\begin{align}
(X_{i_{k}^{**}}^\top M_{\mathcal{S}_{k-1}}Y)^2&\geq ||X_{i_{k}^{**}}^\top M_{\mathcal{S}_{k-1}}||^4\cdot\frac{(X_{i_{k}^{*}}^\top M_{\mathcal{S}_{k-1}}Y)^2}{||X_{i_{k}^{*}}^\top M_{\mathcal{S}_{k-1}}||^4}\nonumber\\
&\geq \frac{\tau_{\min}^2}{\tau_{\max}^2}\cdot(X_{i_{k}^{*}}^\top M_{\mathcal{S}_{k-1}}Y)^2.\label{eq:xmlb3}
\end{align}
Similarly, from inequalities \eqref{eq:marglb}, \eqref{eq:rholb} and \eqref{eq:xmlb3}, we also obtain that
\begin{align}
\rho_k-\rho_{k+1}&\geq(X_{i_{k}^{**}}^\top M_{\mathcal{S}_{k-1}}Y)^2/\lambda_{\text{max}}({X}^\top _{\mathcal{A}_k}{X}_{\mathcal{A}_k})\nonumber\\
&\geq\frac{\tau_{\min}^2}{\tau_{\max}^2}\cdot \frac{(X_{i_{k}^{*}}^\top M_{\mathcal{S}_{k-1}}Y)^2}{\lambda_{\text{max}}({X}^\top _{\mathcal{A}_k}{X}_{\mathcal{A}_k})}\geq \frac{c_\beta^2\tau_{\min}^4}{4\tau_{\max}^3}n^{1-2\xi_\beta}.\label{eq:lb1.2.3}
\end{align}

Combining the results in inequalities \eqref{eq:lb1.1}, \eqref{eq:lb1.2.1}, \eqref{eq:lb1.2.2} and \eqref{eq:lb1.2.3}, with the fact that $\tau_{\min}<\tau_{\max}$, we obtain the following uniform lower bound for $\rho_k-\rho_{k+1}$ under events $\mathcal{E}$, $\mathcal{E}_\lambda$ and $\mathcal{E}_\epsilon$ in both $i_{k}^*\in\mathcal{A}_k$ and $i_{k}^*\not\in\mathcal{A}_k$ cases,
\begin{equation}\label{eq:lb1}
\rho_k-\rho_{k+1}\geq\frac{c_\beta^2\tau_{\min}^4}{4\tau_{\max}^3}n^{1-2\xi_\beta}.
\end{equation}
Consequently, events $\mathcal{E}$, $\mathcal{E}_\lambda$ and $\mathcal{E}_\epsilon$ together imply that
\[
\rho_1\geq\rho_1-\rho_{\kappa+1}=\sum_{k=1}^\kappa (\rho_k-\rho_{k+1})\geq \kappa\cdot \frac{c_\beta^2\tau_{\min}^4}{4\tau_{\max}^3}n^{1-2\xi_\beta}= 2c_y n^{1+\xi_y}.
\]
Notice that $\rho_1=||Y||^2$ and $\sigma_y^2\leq c_y n^{\xi_y}$. Then, under assumption (A4), we have
\begin{align*}
P\left(\rho_1\geq 2c_y n^{1+\xi_y}\right)&=P\left(\frac{1}{n}||Y||^2\geq 2c_y n^{\xi_y}\right)\\
&\leq P\left(\frac{1}{n}||Y||^2\geq 2\sigma_y^2\right)\leq O\left(\exp\left(-c_Yn^{\xi_Y}\right)\right),
\end{align*}
which indicates that
\[P\left(\mathcal{E}\cap\mathcal{E}_\lambda\cap\mathcal{E}_\epsilon\right)\leq O\left(\exp\left(-c_Yn^{\xi_Y}\right)\right).\]
Finally, from probability bounds \eqref{eq:elambda} and \eqref{eq:eepsilon}, we obtain that
\begin{align*}
&~~~~P\left(\mathcal{T}\not\subset\mathcal{S}_\kappa\right)=P\left(\mathcal{E}\right)\leq P\left(\mathcal{E}\cap\mathcal{E}_\lambda\cap\mathcal{E}_\epsilon\right)+P\left(\mathcal{E}_\lambda^c\right)+P\left(\mathcal{E}_\epsilon^c\right)\\
&\leq O\left(\exp\left(-c_Yn^{\xi_Y}\right)\right)+O\left(\exp\left(-c_\epsilon n^{\xi_\epsilon}/2\right)\right)+O\left(\exp\left(-c_s^* n^{1-2\xi_s^*}\right)\right)\\
&\leq O\left(\exp\left(-c n^{\xi}\right)\right),
\end{align*}
where $c$ is some positive constant and $\xi=\min\{{\xi_Y}, \xi_p+3\xi_s^*-2\xi_{\beta}, {1-2\xi_s^*}\}$ with $\xi_s^*=\xi_t\vee \xi_s$.
\end{proof}

\begin{proof}[Proof of Theorem \ref{thm:sel2}]
In the Selection Criterion 2, model $\mathcal{S}_{k+1}$ is determined as $\mathcal{S}_{k+1}=\mathcal{S}_{k}\cup\mathcal{B}_{k+1}$, where $\mathcal{B}_{k+1}$ is obtained by solving the PLS problem
\[\min_{\bm{\beta}\in\mathbb{R}^{{\scriptstyle a}_{\scalebox{.7}{$\scriptscriptstyle k+1$}}}}\left[\big|\big|M_{\mathcal{S}_k}Y-X_{\mathcal{A}_{k+1}}\bm{\beta}\big|\big|^2+n\sum_{j=1}^{a_{k+1}}p_{\lambda_n}(|\beta_j|)\right].\]

In the theorem, we only consider LASSO and SCAD for simplicity. According to the definitions of their penalty functions in equations \eqref{p.lasso} and \eqref{p.scad}, we obtain that $p_{\lambda_n}(|\theta|)\leq \lambda_n|\theta|$ holds for both methods.

By setting $\kappa=2c_\kappa n^{\xi_y+2\xi_\beta}$ with $c_\kappa=8c_y\tau^3_{\text{max}}/(c_\beta^2\tau_{\text{min}}^4)$, we consider the following three events same as in the proof of Theorem \ref{thm:sel1}.
\begin{enumerate}[align=left, leftmargin=*]
\item The objective event is defined as $\mathcal{E}=\{\mathcal{T}\not\subset\mathcal{S}_\kappa\}$.
\item The event concerning extreme eigenvalues of Gram matrices is defined as 
\[\mathcal{E}_\lambda = \left\{\tau_{\text{min}}\leq \min\limits_{|\mathcal{S}|\leq s_{\kappa}^*}\lambda_{\text{min}}\{\hat{{\Sigma}}_{(\mathcal{S})}\}\leq \max\limits_{|\mathcal{S}|\leq s_{\kappa}^*}\lambda_{\text{max}}\{\hat{{\Sigma}}_{(\mathcal{S})}\}\leq\tau_{\text{max}}\right\},\]
where $s_{\kappa}^*=2C_s^*n^{\xi_s^*}$ with $C_s^*=c_t\vee c_s$ and $\xi_s^*=\xi_t\vee \xi_s$.
\item The event concerning weighted sums of elements in $\bm{\epsilon}$ is given by
\[\mathcal{E}_\epsilon=\left\{\max_{\substack{i\in \mathcal{T}\\ \mathcal{S}\not\ni i,\, |\mathcal {S}|<s_{\kappa}^*}}\left(\frac{{X}^\top _i{M}_{S}\bm{\epsilon}}{||{X}^\top _i{M}_{S}||}\right)^2< n^{\xi_\epsilon}\right\},\]
where $\xi_\epsilon=\xi_p+3\xi_s^*-2\xi_{\beta}$.
\end{enumerate}

For events $\mathcal{E}_\lambda^c$ and $\mathcal{E}_\epsilon^c$, we have the probability bounds \eqref{eq:elambda} and \eqref{eq:eepsilon}, respectively. Denoting $\mathcal{S}_{0}=\emptyset$ and $M_{\mathcal{S}_{0}}=I_n$, for $k\geq 1$, we define
\begin{align*}
\rho_k&=\min_{\bm{\beta}\in\mathbb{R}^{{\scriptstyle a}_{\scalebox{.7}{$\scriptscriptstyle k$}}}}\left[||M_{\mathcal{S}_{k-1}}Y-X_{\mathcal{A}_{k}}\bm{\beta}||^2 + n\sum_{j=1}^{a_k}p_{\lambda_n}(|\beta_j|)\right]\\
&=\min_{\bm{\beta}\in\mathbb{R}^{{\scriptstyle b}_{\scalebox{.7}{$\scriptscriptstyle k$}}}}\left[||M_{\mathcal{S}_{k-1}}Y-X_{\mathcal{B}_{k}}\bm{\beta}||^2 + n\sum_{j=1}^{b_k}p_{\lambda_n}(|\beta_j|)\right].
\end{align*}
From the definition of $\rho_k$, we obtain that
\begin{equation}\label{eq:thm2.1}
\rho_k\geq\min_{\bm{\beta}\in\mathbb{R}^{{\scriptstyle b}_{\scalebox{.7}{$\scriptscriptstyle k$}}}}||M_{\mathcal{S}_{k-1}}Y-X_{\mathcal{B}_{k}}\bm{\beta}||^2\geq||M_{\mathcal{S}_{k}}Y||^2,
\end{equation}
and
\begin{equation}\label{eq:thm2.2}
\rho_k\leq\left.\left[||M_{\mathcal{S}_{k-1}}Y-X_{\mathcal{B}_{k}}\bm{\beta}||^2 + n\sum_{j=1}^{b_k}p_{\lambda_n}(|\beta_j|)\right]\right|_{\bm{\beta}=0}\leq ||M_{\mathcal{S}_{k-1}}Y||^2.
\end{equation}

We then evaluate the uniform lower bound of $\rho_k-\rho_{k+1}$ for $1\leq k\leq \kappa$ when events $\mathcal{E}$, $\mathcal{E}_\lambda$ and $\mathcal{E}_\epsilon$ hold. For $1\leq k\leq \kappa$, we define $\mathcal{T}_{{k}}^c=\mathcal{T}-\mathcal{S}_{k}$ and
\[i_{k+1}^*=\argmax_{i\in \mathcal{T}_{{k}}^c}(X_{i}^\top M_{\mathcal{S}_{k}}Y)^2.\]

\textbf{Case 1:} If $i_{k+1}^*\in\mathcal{A}_{k+1}$, we have
\begin{align*}
\rho_{k+1}&=\min_{\bm{\beta}\in\mathbb{R}^{{\scriptstyle a}_{\scalebox{.7}{$\scriptscriptstyle k+1$}}}}\left[||M_{\mathcal{S}_{k}}Y-X_{\mathcal{A}_{k+1}}\bm{\beta}||^2 + n\sum_{j=1}^{a_{k+1}}p_{\lambda_n}(|\beta_j|)\right]\\
&\leq||M_{\mathcal{S}_{k}}Y-X_{i_{k+1}^*}\hat{\beta}^*||^2+n\lambda_n|\hat{\beta}^*|.
\end{align*}
where $\hat{\beta}^*=(X_{i_{k+1}^*}^\top X_{i_{k+1}^*})^{-1}X_{i_{k+1}^*}^\top M_{\mathcal{S}_{k}}Y$ is the OLS estimate of the coefficient in the componentwise regression between $M_{\mathcal{S}_{k}}Y$ and $X_{i_{k+1}^*}$. Therefore, from inequality \eqref{eq:thm2.1}, we have
\begin{align}
\rho_k-\rho_{k+1}&\geq ||M_{\mathcal{S}_{k}}Y||^2-||M_{i_{k+1}^*}M_{\mathcal{S}_{k}}Y||^2-n\lambda_n|\hat{\beta}^*|\nonumber\\
&= ||H_{i_{k+1}^*}M_{\mathcal{S}_{k}}Y||^2-n\lambda_n\frac{|X_{i_{k+1}^*}^\top M_{\mathcal{S}_{k}}Y|}{X_{i_{k+1}^*}^\top X_{i_{k+1}^*}}\nonumber\\
&= \frac{(X_{i_{k+1}^*}^\top M_{\mathcal{S}_{k}}Y)^2}{X_{i_{k+1}^*}^\top X_{i_{k+1}^*}}-n\lambda_n\frac{|X_{i_{k+1}^*}^\top M_{\mathcal{S}_{k}}Y|}{X_{i_{k+1}^*}^\top X_{i_{k+1}^*}}.\label{eq:thm2.3}
\end{align}
From inequality \eqref{eq:marglb}, we know that
\[(X_{i_{k+1}^*}^\top M_{\mathcal{S}_{k}}Y)^2\geq \frac{c_\beta^2\tau_{\min}^2}{4}n^{2-2\xi_\beta}.\] 
Therefore, if we choose $\lambda_n\leq c_\beta\tau_{\min}n^{-\xi_\beta}/4$, inequality \eqref{eq:thm2.3} is followed by 
\begin{equation}\label{eq:lb2.1}
\rho_k-\rho_{k+1}\geq\frac{(X_{i_{k+1}^*}^\top M_{\mathcal{S}_{k}}Y)^2}{2X_{i_{k+1}^*}^\top X_{i_{k+1}^*}}\geq \frac{c_\beta^2\tau_{\min}^2}{8\tau_{\max}}n^{1-2\xi_\beta}.
\end{equation}

\textbf{Case 2:} If $i_{k+1}^*\not\in\mathcal{A}_{k+1}$, let $i_{k+1}^{**}$ denote an arbitrary element in $\mathcal{A}_{k+1}$ and compute
\[\hat{\beta}^{**}=(X_{i_{k+1}^{**}}^\top X_{i_{k+1}^{**}})^{-1}X_{i_{k+1}^{**}}^\top M_{\mathcal{S}_{k}}Y.\]
Then we have
\[\rho_{k+1}\leq||M_{\mathcal{S}_{k}}Y-X_{i_{k+1}^{**}}\hat{\beta}^{**}||^2+n\lambda_n|\hat{\beta}^{**}|,\]
and
\begin{equation}\label{eq:lb2.2.0}
\rho_k-\rho_{k+1}\geq\frac{(X_{i_{k+1}^{**}}^\top M_{\mathcal{S}_{k}}Y)^2}{X_{i_{k+1}^{**}}^\top X_{i_{k+1}^{**}}}-n\lambda_n\frac{|X_{i_{k+1}^{**}}^\top M_{\mathcal{S}_{k}}Y|}{X_{i_{k+1}^{**}}^\top X_{i_{k+1}^{**}}}.
\end{equation}

In events $\mathcal{E}$, $\mathcal{E}_\lambda$ and $\mathcal{E}_\epsilon$, we evaluate the lower bound of $\rho_k-\rho_{k+1}$ for the three different screening criteria.

\textbf{Screening Criterion 1:} When applying the Screening Criterion 1,  we have
\[(X_{i_{k+1}^{**}}^\top M_{\mathcal{S}_{k}}Y)^2\geq (X_{i_{k+1}^*}^\top M_{\mathcal{S}_{k}}Y)^2\geq\frac{c_\beta^2\tau_{\min}^2}{4}n^{2-2\xi_\beta}.\]
Thus, if we choose the same tuning parameter satisfying $\lambda_n\leq c_\beta\tau_{\min}n^{-\xi_\beta}/4$, inequality \eqref{eq:lb2.2.0} is followed by 
\begin{equation}\label{eq:lb2.2.1}
\rho_k-\rho_{k+1}\geq\frac{(X_{i_{k+1}^{**}}^\top M_{\mathcal{S}_{k}}Y)^2}{2X_{i_{k+1}^{**}}^\top X_{i_{k+1}^{**}}}\geq \frac{c_\beta^2\tau_{\min}^2}{8\tau_{\max}}n^{1-2\xi_\beta}.
\end{equation}

\textbf{Screening Criterion 2:} In the Screening Criterion 2, from inequality \eqref{eq:xmlb2}, we have
\[
(X_{i_{k+1}^{**}}^\top M_{\mathcal{S}_{k}}Y)^2\geq \frac{\tau_{\min}}{\tau_{\max}}\cdot(X_{i_{k+1}^{*}}^\top M_{\mathcal{S}_{k}}Y)^2\geq\frac{c_\beta^2\tau_{\min}^3}{4\tau_{\max}}n^{2-2\xi_\beta}.
\]
If we choose $\lambda_n\leq c_\beta\tau_{\min}^{3/2}n^{-\xi_\beta}/(4\tau_{\max}^{1/2})$, we obtain that
\begin{equation}\label{eq:lb2.2.2}
\rho_k-\rho_{k+1}\geq\frac{(X_{i_{k+1}^{**}}^\top M_{\mathcal{S}_{k}}Y)^2}{2X_{i_{k+1}^{**}}^\top X_{i_{k+1}^{**}}}\geq \frac{c_\beta^2\tau_{\min}^3}{8\tau_{\max}^2}n^{1-2\xi_\beta}.
\end{equation}

\textbf{Screening Criterion 3:} In the Screening Criterion 3, from inequality \eqref{eq:xmlb3}, we have
\[
(X_{i_{k+1}^{**}}^\top M_{\mathcal{S}_{k}}Y)^2\geq \frac{\tau_{\min}^2}{\tau_{\max}^2}\cdot(X_{i_{k+1}^{*}}^\top M_{\mathcal{S}_{k}}Y)^2\geq\frac{c_\beta^2\tau_{\min}^4}{4\tau_{\max}^2}n^{2-2\xi_\beta}.
\]
If we choose $\lambda_n\leq c_\beta\tau_{\min}^{2}n^{-\xi_\beta}/(4\tau_{\max})$, we achieve that
\begin{equation}\label{eq:lb2.2.3}
\rho_k-\rho_{k+1}\geq\frac{(X_{i_{k+1}^{**}}^\top M_{\mathcal{S}_{k}}Y)^2}{2X_{i_{k+1}^{**}}^\top X_{i_{k+1}^{**}}}\geq \frac{c_\beta^2\tau_{\min}^4}{8\tau_{\max}^3}n^{1-2\xi_\beta}.
\end{equation}

Combining the results in inequalities \eqref{eq:lb2.1}, \eqref{eq:lb2.2.1}, \eqref{eq:lb2.2.2} and \eqref{eq:lb2.2.3}, we obtain that, when $\mathcal{E}$, $\mathcal{E}_\lambda$ and $\mathcal{E}_\epsilon$ hold, if we choose a tuning parameter satisfies that $\lambda_n\leq c_\beta\tau_{\min}^{2}n^{-\xi_\beta}/(4\tau_{\max})$, then 
\[
\rho_k-\rho_{k+1}\geq \frac{c_\beta^2\tau_{\min}^4}{8\tau_{\max}^3}n^{1-2\xi_\beta}.
\]
Consequently, events $\mathcal{E}$, $\mathcal{E}_\lambda$ and $\mathcal{E}_\epsilon$ together imply that
\[
\rho_1\geq\rho_1-\rho_{\kappa+1}=\sum_{k=1}^\kappa (\rho_k-\rho_{k+1})\geq \kappa\cdot \frac{c_\beta^2\tau_{\min}^4}{8\tau_{\max}^3}n^{1-2\xi_\beta}\geq 2c_y n^{1+\xi_y}.
\]
From inequality \eqref{eq:thm2.2}, we know that $\rho_1\leq||Y||^2$. Then, under assumption (A4), we have
\[
P\left(\rho_1\geq 2c_y n^{1+\xi_y}\right)\leq P\left(\frac{1}{n}||Y||^2\geq 2\sigma_y^2\right)\leq O\left(\exp\left(-c_Yn^{\xi_Y}\right)\right),
\]
which indicates that
\[P\left(\mathcal{E}\cap\mathcal{E}_\lambda\cap\mathcal{E}_\epsilon\right)\leq O\left(\exp\left(-c_Yn^{\xi_Y}\right)\right).\]
Consequently, from probability bounds \eqref{eq:elambda} and \eqref{eq:eepsilon}, we obtain that
\begin{align*}
&~~~~P\left(\mathcal{T}\not\subset\mathcal{S}_\kappa\right)=P\left(\mathcal{E}\right)\leq P\left(\mathcal{E}\cap\mathcal{E}_\lambda\cap\mathcal{E}_\epsilon\right)+P\left(\mathcal{E}_\lambda^c\right)+P\left(\mathcal{E}_\epsilon^c\right)\\
&\leq O\left(\exp\left(-c_Yn^{\xi_Y}\right)\right)+O\left(\exp\left(-c_\epsilon n^{\xi_\epsilon}/2\right)\right)+O\left(\exp\left(-c_s^* n^{1-2\xi_s^*}\right)\right)\\
&\leq O\left(\exp\left(-c n^{\xi}\right)\right),
\end{align*}
where $c$ is some positive constant and $\xi=\min\{{\xi_Y}, \xi_p+3\xi_s^*-2\xi_{\beta}, {1-2\xi_s^*}\}$ with $\xi_s^*=\xi_t\vee \xi_s$.
\end{proof}

\begin{proof}[Proof of Theorem \ref{thm:sel3}]
In the Selection Criterion 3, model $\mathcal{S}_{k+1}$ is obtained by minimizing the penalized least squares
\[\min_{\bm{\beta}\in\mathbb{R}^{{\scriptstyle s}_{\scalebox{.7}{$\scriptscriptstyle k$}}+{\scriptstyle a}_{\scalebox{.7}{$\scriptscriptstyle k+1$}}}}\left[\big|\big|Y-X_{\mathcal{S}_k\cup\mathcal{A}_{k+1}}\bm{\beta}\big|\big|^2+n\sum_{j=1}^{s_k+a_{k+1}}p_{\lambda_n}(|\beta_j|)\right].\] 

Setting $\kappa=2c_\kappa n^{\xi_y+2\xi_\beta}$ with $c_\kappa=8c_y\tau^3_{\text{max}}/(c_\beta^2\tau_{\text{min}}^4)$, we will prove that $\mathcal{T}\subset\mathcal{S}_k$ for some $1\leq k\leq \kappa$ with an overwhelming probability. Similarly, we consider the following four events.
\begin{enumerate}[align=left, leftmargin=*]
\item The objective event is defined as $\mathcal{E}=\bigcap_{k=1}^\kappa \{\mathcal{T}\not\subset\mathcal{S}_k\}$.
\item The event concerning extreme eigenvalues of Gram matrices is defined as 
\[\mathcal{E}_\lambda = \left\{\tau_{\text{min}}\leq \min\limits_{|\mathcal{S}|\leq s_{\kappa}^*}\lambda_{\text{min}}\{\hat{{\Sigma}}_{(\mathcal{S})}\}\leq \max\limits_{|\mathcal{S}|\leq s_{\kappa}^*}\lambda_{\text{max}}\{\hat{{\Sigma}}_{(\mathcal{S})}\}\leq\tau_{\text{max}}\right\},\]
where $s_{\kappa}^*=2C_s^*n^{\xi_s^*}$ with $C_s^*=c_t\vee c_s$ and $\xi_s^*=\xi_t\vee \xi_s$.
\item The event concerning weighted sums of elements in $\bm{\epsilon}$ is given by
\[\mathcal{E}_\epsilon=\left\{\max_{\substack{i\in \mathcal{T}\\ \mathcal{S}\not\ni i,\, |\mathcal {S}|<s_{\kappa}^*}}\left(\frac{{X}^\top _i{M}_{S}\bm{\epsilon}}{||{X}^\top _i{M}_{S}||}\right)^2< n^{\xi_\epsilon}\right\},\]
where $\xi_\epsilon=\xi_p+3\xi_s^*-2\xi_{\beta}$.
\item The event concerning the response defined as
$\mathcal{E}_y=\left\{||Y||^2<2c_y n^{1+\xi_y}\right\}$.
\end{enumerate}

The probability bounds for $\mathcal{E}_\lambda^c$ and $\mathcal{E}_\epsilon^c$ are already presented in inequalities \eqref{eq:elambda} and \eqref{eq:eepsilon}. And according to assumption (A4), we also have
\begin{equation}\label{eq:ey}
P\left(\mathcal{E}_y^c\right)\leq O\left(\exp\left(-c_Yn^{\xi_Y}\right)\right).
\end{equation}

Denoting $\mathcal{S}_{0}=\emptyset$ and $M_{\mathcal{S}_{0}}=I_n$, for $k\geq 1$, we define
\begin{align*}
\rho_k&=\min_{\bm{\beta}\in\mathbb{R}^{{\scriptstyle s}_{\scalebox{.7}{$\scriptscriptstyle k-1$}}+{\scriptstyle a}_{\scalebox{.7}{$\scriptscriptstyle k$}}}}\left[||Y-X_{\mathcal{S}_{k-1}\cup \mathcal{A}_{k}}\bm{\beta}||^2 + n\sum_{j=1}^{s_{k-1}+a_k}p_{\lambda_n}(|\beta_j|)\right]\\
&=\min_{\bm{\beta}\in\mathbb{R}^{{\scriptstyle s}_{\scalebox{.7}{$\scriptscriptstyle k$}}}}\left[||Y-X_{\mathcal{S}_{k}}\bm{\beta}||^2 + n\sum_{j=1}^{s_k}p_{\lambda_n}(|\beta_j|)\right].
\end{align*}

From the definition of $\rho_k$, we have
\begin{equation}\label{eq:thm3.1}
\rho_k\geq\min_{\bm{\beta}\in\mathbb{R}^{{\scriptstyle s}_{\scalebox{.7}{$\scriptscriptstyle k$}}}}||Y-X_{\mathcal{S}_{k}}\bm{\beta}||^2 =||M_{\mathcal{S}_{k}}Y||^2,
\end{equation}
and
\begin{equation}\label{eq:thm3.2}
\rho_k\leq\left.\left[||Y-X_{\mathcal{S}_{k-1}\cup \mathcal{A}_{k}}\bm{\beta}||^2 + n\sum_{j=1}^{s_{k-1}+a_k}p_{\lambda_n}(|\beta_j|)\right]\right|_{\bm{\beta}=0}\leq ||Y||^2.
\end{equation}

Notice that event $\mathcal{E}$ also indicates that $\mathcal{T}_{{k}}^c=\mathcal{T}-\mathcal{S}_{k}\neq\emptyset$ for all $1\leq k\leq \kappa$.
Then we evaluate the uniform lower bound of $\rho_k-\rho_{k+1}$ in events $\mathcal{E}$, $\mathcal{E}_\lambda$, $\mathcal{E}_\epsilon$ and $\mathcal{E}_y$. For $1\leq k\leq \kappa$, we define
\[i_{k+1}^*=\argmax_{i\in \mathcal{T}_{{k}}^c}(X_{i}^\top M_{\mathcal{S}_{k}}Y)^2.\]

\textbf{Case 1:} If $i_{k+1}^*\in\mathcal{A}_{k+1}$, we have
\begin{align*}
\rho_{k+1}&=\min_{\bm{\beta}\in\mathbb{R}^{{\scriptstyle s}_{\scalebox{.7}{$\scriptscriptstyle k$}}+{\scriptstyle a}_{\scalebox{.7}{$\scriptscriptstyle k+1$}}}}\left[||Y-X_{\mathcal{S}_{k}\cup \mathcal{A}_{k+1}}\bm{\beta}||^2 + n\sum_{j=1}^{s_{k}+a_{k+1}}p_{\lambda_n}(|\beta_j|)\right]\\
&\leq||M_{\mathcal{S}_{k}\cup\{i_{k+1}^*\}}Y||^2+n\sum_{j=1}^{s_{k}+1}p_{\lambda_n}(|\hat{\beta}_j^*|),
\end{align*}
where $\hat{\bm{\beta}}^*=(\hat{\beta}_1^*,\cdots,\hat{\beta}_{s_k+1}^*)^\top =(X_{\mathcal{S}_{k}\cup\{i_{k+1}^*\}}^\top X_{\mathcal{S}_{k}\cup\{i_{k+1}^*\}})^{-1}X_{\mathcal{S}_{k}\cup\{i_{k+1}^*\}}^\top Y$ is the OLS estimate of the coefficient in the regression between $Y$ and $X_{\mathcal{S}_{k}\cup\{i_{k+1}^*\}}$. Therefore, from inequality \eqref{eq:thm3.1}, we have
\begin{align}
\rho_k-\rho_{k+1}&\geq ||M_{\mathcal{S}_{k}}Y||^2-||M_{\mathcal{S}_{k}\cup\{i_{k+1}^*\}}Y||^2 - n\sum_{j=1}^{s_{k}+1}p_{\lambda_n}(|\hat{\beta}_j^*|)\nonumber\\
&\geq \frac{(X_{i_{k+1}^*}^\top M_{\mathcal{S}_{k}}Y)^2}{X_{i_{k+1}^*}^\top X_{i_{k+1}^*}}-n\lambda_n\sum_{j=1}^{s_{k}+1}|\hat{\beta}_j^*|,\label{eq:thm3.3}
\end{align}
where the last inequality comes from Corollary \ref{cor:2to1}. For the first term on the right-hand side of inequality \eqref{eq:thm3.3}, according to inequality \eqref{eq:marglb}, we have
\[\frac{(X_{i_{k+1}^*}^\top M_{\mathcal{S}_{k}}Y)^2}{X_{i_{k+1}^*}^\top X_{i_{k+1}^*}}\geq \frac{c_\beta^2\tau_{\min}^2}{4\tau_{\max}}n^{1-2\xi_\beta}.\] 
When it comes to the second term, for any $i\not\in\mathcal{S}_k$, we compute
\[\hat{\bm{\beta}}=(\hat{\beta}_1,\cdots,\hat{\beta}_{s_k+1})^\top =(X_{\mathcal{S}_{k}\cup\{i\}}^\top X_{\mathcal{S}_{k}\cup\{i\}})^{-1}X_{\mathcal{S}_{k}\cup\{i\}}^\top Y.\]
Then, when events $\mathcal{E}_\lambda$ and $\mathcal{E}_y$ hold, we obtain that
\begin{align}
\left(\sum_{j=1}^{s_{k}+1}|\hat\beta_j|\right)^2 &\leq (s_k+1) \cdot \sum_{j=1}^{s_k+1}\hat\beta_j^2\nonumber\\
&= (s_k+1) \cdot Y^\top X_{\mathcal{S}_{k}\cup \{i\}}(X_{\mathcal{S}_{k}\cup \{i\}}^\top  X_{\mathcal{S}_{k}\cup \{i\}})^{-2} X_{\mathcal{S}_{k}\cup\{ i\}}^\top Y\nonumber\\
&\leq (s_k+1)\cdot Y^\top H_{\mathcal{S}_{k}\cup \{i\}}Y/{\lambda_{\text{min}}(X_{\mathcal{S}_{k}\cup \{i\}}^\top  X_{\mathcal{S}_{k}\cup \{i\}})}\notag\\
&\leq{s_\kappa^*\cdot ||Y||^2}/{\lambda_{\text{min}}(X_{\mathcal{S}_{k}\cup \{i\}}^\top  X_{\mathcal{S}_{k}\cup\{ i\}})}\nonumber\\
&\leq 2C_s^*n^{\xi_s^*}\cdot 2c_y n^{1+\xi_y}/(n\tau_{\min})\nonumber\\
&= c^*n^{\xi_s^*+\xi_y},\label{eq:thm3.4}
\end{align}
where $c^*=4c_y C_s^* /\tau_{\min}$. Therefore, inequality \eqref{eq:thm3.3} can be followed by
\begin{align}
\rho_k-\rho_{k+1}
&\geq \frac{(X_{i_{k+1}^*}^\top M_{\mathcal{S}_{k}}Y)^2}{X_{i_{k+1}^*}^\top X_{i_{k+1}^*}}-n\lambda_n\sum_{j=1}^{s_{k}+1}|\hat{\beta}_j^*|\nonumber\\
&\geq \frac{c_\beta^2\tau_{\min}^2}{4\tau_{\max}}n^{1-2\xi_\beta}-\sqrt{c^*}\cdot\lambda_n n^{1+(\xi_s^*+\xi_y)/2}\nonumber\\
&=\frac{c_\beta^2\tau_{\min}^2}{4\tau_{\max}}n^{1-2\xi_\beta}\left(1-\frac{4\sqrt{c^*}\tau_{\max}}{c_\beta^2\tau_{\min}^{2}}\lambda_nn^{2\xi_\beta+(\xi_s^*+\xi_y)/2}\right).\label{eq:thm3.5}
\end{align}
Consequently, if we choose a tuning parameter satisfying that
\[\lambda_n\leq \frac{c_\beta^2\tau_{\min}^{2}}{8\sqrt{c^*}\tau_{\max}}\cdot n^{-2\xi_\beta-(\xi_s^*+\xi_y)/2},\] 
then inequality \eqref{eq:thm3.5} is followed by
\begin{equation}\label{eq:lb3.1}
\rho_k-\rho_{k+1}\geq \frac{c_\beta^2\tau_{\min}^2}{8\tau_{\max}}n^{1-2\xi_\beta}.
\end{equation}

\textbf{Case 2:} If $i_{k+1}^*\not\in\mathcal{A}_{k+1}$, let $i_{k+1}^{**}$ denote an arbitrary element in $\mathcal{A}_{k+1}$ and compute
\[\hat{\bm{\beta}}^{**}=(\hat{\beta}_1^{**},\cdots,\hat{\beta}_{s_k+1}^{**})^\top =(X_{\mathcal{S}_{k}\cup\{i_{k+1}^{**}\}}^\top X_{\mathcal{S}_{k}\cup\{i_{k+1}^{**}\}})^{-1}X_{\mathcal{S}_{k}\cup\{i_{k+1}^{**}\}}^\top Y.\]
Thus, similar to inequality \eqref{eq:thm3.3}, we can achieve
\begin{equation}\label{eq:lb3.2.0}
\rho_k-\rho_{k+1}\geq \frac{(X_{i_{k+1}^{**}}^\top M_{\mathcal{S}_{k}}Y)^2}{X_{i_{k+1}^{**}}^\top X_{i_{k+1}^{**}}}-n\lambda_n\sum_{j=1}^{s_{k}+1}|\hat{\beta}_j^{**}|.
\end{equation}
Moreover, we notice that inequality \eqref{eq:thm3.4} also holds for $\hat{\bm{\beta}}^{**}$ under $\mathcal{E}_\lambda$ and $\mathcal{E}_y$, that is
\[\left(\sum_{j=1}^{s_{k}+1}|\hat\beta_j^{**}|\right)^2\leq c^*n^{\xi_s^*+\xi_y}.\]

\textbf{Screening Criterion 1:} When applying the Screening Criterion 1,  we have
\[(X_{i_{k+1}^{**}}^\top M_{\mathcal{S}_{k}}Y)^2\geq (X_{i_{k+1}^*}^\top M_{\mathcal{S}_{k}}Y)^2\geq\frac{c_\beta^2\tau_{\min}^2}{4}n^{2-2\xi_\beta}.\]
If we choose a tuning parameter satisfying $\lambda_n\leq\frac{c_\beta^2\tau_{\min}^{2}}{8\sqrt{c^*}\tau_{\max}}n^{-2\xi_\beta-(\xi_s^*+\xi_y)/2}$, inequality \eqref{eq:lb3.2.0} is followed by 
\begin{equation}\label{eq:lb3.2.1}
\rho_k-\rho_{k+1}\geq\frac{c_\beta^2\tau_{\min}^2}{4\tau_{\max}}n^{1-2\xi_\beta}-\sqrt{c^*}\cdot\lambda_n n^{1+(\xi_s^*+\xi_y)/2}\geq \frac{c_\beta^2\tau_{\min}^2}{8\tau_{\max}}n^{1-2\xi_\beta}.
\end{equation}

\textbf{Screening Criterion 2:} In the Screening Criterion 2, from inequality \eqref{eq:xmlb2}, we have
\[
(X_{i_{k+1}^{**}}^\top M_{\mathcal{S}_{k}}Y)^2\geq \frac{\tau_{\min}}{\tau_{\max}}\cdot(X_{i_{k+1}^{*}}^\top M_{\mathcal{S}_{k}}Y)^2\geq\frac{c_\beta^2\tau_{\min}^3}{4\tau_{\max}}n^{2-2\xi_\beta}.
\]
If we choose $\lambda_n\leq\frac{c_\beta^2\tau_{\min}^{3}}{8\sqrt{c^*}\tau_{\max}^2}n^{-2\xi_\beta-(\xi_s^*+\xi_y)/2}$, we obtain that
\begin{equation}\label{eq:lb3.2.2}
\rho_k-\rho_{k+1}\geq\frac{c_\beta^2\tau_{\min}^3}{4\tau_{\max}^2}n^{1-2\xi_\beta}-\sqrt{c^*}\cdot\lambda_n n^{1+(\xi_s^*+\xi_y)/2}\geq \frac{c_\beta^2\tau_{\min}^3}{8\tau_{\max}^2}n^{1-2\xi_\beta}.
\end{equation}

\textbf{Screening Criterion 3:} When applying the Screening Criterion 3, from inequality \eqref{eq:xmlb3}, we have
\[
(X_{i_{k+1}^{**}}^\top M_{\mathcal{S}_{k}}Y)^2\geq \frac{\tau_{\min}^2}{\tau_{\max}^2}\cdot(X_{i_{k+1}^{*}}^\top M_{\mathcal{S}_{k}}Y)^2\geq\frac{c_\beta^2\tau_{\min}^4}{4\tau_{\max}^2}n^{2-2\xi_\beta}.
\]
If we choose $\lambda_n\leq\frac{c_\beta^2\tau_{\min}^{4}}{8\sqrt{c^*}\tau_{\max}^3}n^{-2\xi_\beta-(\xi_s^*+\xi_y)/2}$, inequality \eqref{eq:lb3.2.0} is followed by 
\begin{equation}\label{eq:lb3.2.3}
\rho_k-\rho_{k+1}\geq\frac{c_\beta^2\tau_{\min}^4}{4\tau_{\max}^3}n^{1-2\xi_\beta}-\sqrt{c^*}\cdot\lambda_n n^{1+(\xi_s^*+\xi_y)/2}\geq \frac{c_\beta^2\tau_{\min}^4}{8\tau_{\max}^3}n^{1-2\xi_\beta}.
\end{equation}

Combining the results in inequalities \eqref{eq:lb3.1}, \eqref{eq:lb3.2.1}, \eqref{eq:lb3.2.2} and \eqref{eq:lb3.2.3}, when events $\mathcal{E}$, $\mathcal{E}_\lambda$, $\mathcal{E}_\epsilon$ and $\mathcal{E}_y$ hold with a tuning parameter satisfying that $\lambda_n\leq\frac{c_\beta^2\tau_{\min}^{4}}{8\sqrt{c^*}\tau_{\max}^3}n^{-2\xi_\beta-(\xi_s^*+\xi_y)/2}$, then we have
\[
\rho_k-\rho_{k+1}\geq \frac{c_\beta^2\tau_{\min}^4}{8\tau_{\max}^3}n^{1-2\xi_\beta}.
\]
Consequently, events $\mathcal{E}$, $\mathcal{E}_\lambda$, $\mathcal{E}_\epsilon$ and $\mathcal{E}_y$ together imply that
\[
\rho_1\geq\rho_1-\rho_{\kappa+1}=\sum_{k=1}^\kappa (\rho_k-\rho_{k+1})\geq \kappa\cdot \frac{c_\beta^2\tau_{\min}^4}{8\tau_{\max}^3}n^{1-2\xi_\beta}\geq 2c_y n^{1+\xi_y}.
\]
From inequality \eqref{eq:thm3.2}, we know that $\rho_1\leq||Y||^2$. Then, under assumption (A4), we have
\[
P\left(\rho_1\geq 2c_y n^{1+\xi_y}\right)\leq P\left(\frac{1}{n}||Y||^2\geq 2\sigma_y^2\right)\leq O\left(\exp\left(-c_Yn^{\xi_Y}\right)\right),
\]
which indicates that
\[P\left(\mathcal{E}\cap\mathcal{E}_\lambda\cap\mathcal{E}_\epsilon\cap\mathcal{E}_y\right)\leq O\left(\exp\left(-c_Yn^{\xi_Y}\right)\right).\]
Finally, from probability bounds \eqref{eq:elambda}, \eqref{eq:eepsilon} and \eqref{eq:ey}, we obtain that
\begin{align*}
&~~~~P\left(\mathcal{T}\not\subset\mathcal{S}_k\text{ for all }1\leq k\leq \kappa\right)=P\left(\mathcal{E}\right)\\
&\leq P\left(\mathcal{E}\cap\mathcal{E}_\lambda\cap\mathcal{E}_\epsilon\cap\mathcal{E}_y\right)+P\left(\mathcal{E}_\lambda^c\right)+P\left(\mathcal{E}_\epsilon^c\right)+P\left(\mathcal{E}_y^c\right)\\
&\leq O\left(\exp\left(-c_Yn^{\xi_Y}\right)\right)+O\left(\exp\left(-c_\epsilon n^{\xi_\epsilon}/2\right)\right)+O\left(\exp\left(-c_s^* n^{1-2\xi_s^*}\right)\right)\\
&\leq O\left(\exp\left(-c n^{\xi}\right)\right),
\end{align*}
where $c$ is some positive constant and $\xi=\min\{{\xi_Y}, \xi_p+3\xi_s^*-2\xi_{\beta}, {1-2\xi_s^*}\}$ with $\xi_s^*=\xi_t\vee \xi_s$.
\end{proof}

\section{Discussion}
In this paper, we prove the sure screening properties of three types of iterative screening algorithms under reasonable assumptions, where the sure screening properties of many classical screening methods, such as FR, ISIS and Van-ISIS, can be achieved directly from our results. Currently, we only consider iterative algorithms on linear models. In the future work, we will investigate the sure screening properties of iterative screening methods applying general loss functions.

\appendix
\section{Proof of preliminary results}
\begin{proof}[Proof of Proposition \ref{prop:rsslb}] Recall that
\[
H_{\mathcal S\cup \mathcal A}= \begin{bmatrix}
{X}_{\mathcal S} & {X}_{\mathcal A}
\end{bmatrix}
\begin{bmatrix}
X_{\mathcal S}^\top X_{\mathcal S} & X_{\mathcal S}^\top X_{\mathcal A} \\  
X_{\mathcal A}^\top X_{\mathcal S} & X_{\mathcal A}^\top X_{\mathcal A}  
\end{bmatrix} ^{-1}                 
\begin{bmatrix}     
    X_{\mathcal S} & 
    X_{\mathcal A} 
\end{bmatrix}^\top .\]

Notice that $X_{\mathcal A}^\top M_{\mathcal S}X_{\mathcal A}$ is invertible when $[X_{\mathcal S}, X_{\mathcal A}]$ is of full column rank. Then, denoting $D_{1}=(X_{\mathcal S}^\top X_{\mathcal S})^{-1}$, $D_2=X_{\mathcal S}^\top X_{\mathcal A}$, $D_3=X_{\mathcal A}^\top X_{\mathcal S}$ and $D_4=(X_{\mathcal A}^\top M_{\mathcal S}X_{\mathcal A})^{-1}$, by the blockwise inverse formula \citep{binv}, we have
\[
\begin{bmatrix}
X_{\mathcal S}^\top X_{\mathcal S}& X_{\mathcal S}^\top X_{\mathcal A} \\  
X_{\mathcal A}^\top X_{\mathcal S}& X_{\mathcal A}^\top X_{\mathcal A} 
\end{bmatrix} ^{-1}
=
\begin{bmatrix}
D_1 +D_1D_2D_4D_3D_1 & -D_1D_2D_4 \\  
-D_4D_3D_1 & D_4  
\end{bmatrix}.
\]
Consequently, we have
\begin{align*}
&~~~~\begin{bmatrix}
{X}_{\mathcal S} & {X}_{\mathcal A}
\end{bmatrix}
\begin{bmatrix}
X_{\mathcal S}^\top X_{\mathcal S} & X_{\mathcal S}^\top X_{\mathcal A} \\  
X_{\mathcal A}^\top X_{\mathcal S} & X_{\mathcal A}^\top X_{\mathcal A}  
\end{bmatrix} ^{-1}                 
\begin{bmatrix}     
    X_{\mathcal S} &
    X_{\mathcal A} 
\end{bmatrix}^\top \\
&=H_{\mathcal S}-M_{\mathcal S}X_{\mathcal A}D_4X_{\mathcal A}^\top H_{\mathcal S}+M_{\mathcal S}X_{\mathcal A}D_4X_{\mathcal A}^\top \\
&=H_{\mathcal S}+M_{\mathcal S}X_{\mathcal A}(X_{\mathcal A}^\top M_{\mathcal S}X_{\mathcal A})^{-1}X_{\mathcal A}^\top M_{\mathcal S}.
\end{align*}
Therefore, we achieve that
\[M_{\mathcal S} -M_{\mathcal S\cup \mathcal A}= H_{\mathcal S\cup \mathcal A}-H_{\mathcal S}=M_{\mathcal S}X_{\mathcal A}(X_{\mathcal A}^\top M_{\mathcal S}X_{\mathcal A})^{-1}X_{\mathcal A}^\top M_{\mathcal S}.\]
Consequently, we have
\begin{align}
||{M}_{\mathcal S}{Y}||^2-||{M}_{\mathcal S\cup \mathcal A}{Y}||^2&=Y^\top M_{\mathcal S}X_{\mathcal A}(X_{\mathcal A}^\top M_{\mathcal S}X_{\mathcal A})^{-1}X_{\mathcal A}^\top M_{\mathcal S}Y\label{eq:p1.1}\\
&\geq \lambda_{\text{min}}\{(X_{\mathcal A}^\top M_{\mathcal S}X_{\mathcal A})^{-1}\}\cdot ||X_{\mathcal A}^\top M_{\mathcal S}Y||^2\nonumber\\
&\geq\sum_{i\in {\mathcal A}}({X}^\top _i{M}_{\mathcal S}{Y})^2/\lambda_{\text{max}}({X}^\top _{\mathcal A}M_{\mathcal S}{X}_{\mathcal A}).\nonumber
\end{align}
\end{proof}
\begin{proof}[Proof of Corollary \ref{cor:2to1}]
The conclusion can be achieved directly from inequality \eqref{eq:p1.1} by setting $\mathcal{A}=\{i\}$.
\end{proof}
\begin{proof}[Proof of Proposition \ref{prop:3to1}] 
For any $i\not\in \mathcal S$, $\hat\beta_i$ can be regarded as the last entry in the OLS estimate
\[(X_{\mathcal S\cup \{i\}}^\top X_{\mathcal S\cup \{i\}})^{-1}X_{\mathcal S\cup \{i\}}^\top Y.\]
Therefore, according to the blockwise inverse formula \citep{binv}, the estimate $\hat{\beta}_i$ can be expressed as
\begin{align}
\hat\beta_i&=[0,\cdots,1](X_{\mathcal S\cup \{i\}}^\top X_{\mathcal S\cup \{i\}})^{-1}X_{\mathcal S\cup \{i\}}^\top Y\nonumber\\
&=[-(X_i^\top M_{\mathcal S}X_i)^{-1}X_i^\top X_{\mathcal S}(X_{\mathcal S}^\top X_{\mathcal S})^{-1}X_{\mathcal S}^\top +(X_i^\top M_{\mathcal S}X_i)^{-1}X_i^\top ]Y\nonumber\\
&=(X_i^\top M_{\mathcal S}X_i)^{-1}X_i^\top M_{\mathcal S}Y.
\end{align}
\end{proof}
\begin{proof}[Proof of Proposition \ref{prop:lb1}]
Recall that $M_{\mathcal S}X_{\mathcal S}=0$. Then for any $i\in {\mathcal T}_{\mathcal S}^c$, we have 
\begin{equation}\label{p2.1}
{X}^\top _i {M}_{\mathcal S}{Y}={X}^\top _i {M}_{\mathcal S}{X}_{\mathcal{T}_{\mathcal S}^c}{\bm\beta}_{\mathcal{T}_{\mathcal S}^c}+{X}^\top _i{M}_{\mathcal S}{\bm \epsilon}.
\end{equation}
For the first term in the right-hand side of equation \eqref{p2.1}, we have
\begin{align}
\max\limits_{i\in {\mathcal T}_{\mathcal S}^c}({X}^\top _i{M}_{\mathcal S}{X}_{\mathcal{T}_{\mathcal S}^c}{\bm \beta}_{\mathcal{T}_{\mathcal S}^c})^2&\geq |\mathcal{T}_{\mathcal S}^c|^{-1}\cdot \sum_{i\in \mathcal{T}_{\mathcal S}^c}({X}^\top _i{M}_{\mathcal S}{X}_{\mathcal{T}_{\mathcal S}^c}{\bm \beta}_{\mathcal{T}_{\mathcal S}^c})^2\nonumber\\
&=|\mathcal{T}_{\mathcal S}^c|^{-1}\cdot||{X}^\top _{\mathcal {T}_{\mathcal S}^c}{M}_{\mathcal S}{X}_{\mathcal{T}_{\mathcal S}^c}\bm{\beta}_{\mathcal{T}_\mathcal{S}^c}||^2\nonumber\\
&\geq|\mathcal{T}_{\mathcal S}^c|^{-1}\cdot||\bm{\beta}_{\mathcal{T}_{\mathcal S}^c}^\top \cdot{X}^\top _{\mathcal{T}_{\mathcal S}^c}{M}_{\mathcal S}{X}_{\mathcal{T}_{\mathcal S}^c}\bm{\beta}_{\mathcal{T}_{\mathcal S}^c}||^2/||\bm{\beta}_{\mathcal{T}_{\mathcal S}^c}||^2\nonumber\\
&= |\mathcal{T}_{\mathcal S}^c|^{-1}\cdot||{M}_{\mathcal S}{X}_{\mathcal{T}_{\mathcal S}^c}\bm{\beta}_{\mathcal{T}_{\mathcal S}^c}||^4/||\bm{\beta}_{\mathcal{T}_{\mathcal S}^c}||^2.\label{p2.2}
\end{align}
Meanwhile, under the condition that ${X}_{\mathcal S^*}$ is of full column rank, we obtain
\begin{align}
||{M}_{\mathcal S}{X}_{{\mathcal T}_{\mathcal S}^c}\bm{\beta}_{{\mathcal T}_{\mathcal S}^c}||^2
&=||[{X}_{{\mathcal T}_{\mathcal S}^c}-{X}_{\mathcal{S}}({X}^\top _{\mathcal{S}}{X}_{\mathcal{S}})^{-1}{X}^\top _{\mathcal{S}}{X}_{\mathcal{T}_{\mathcal S}^c}]\bm{\beta}_{\mathcal{T}_{\mathcal S}^c}||^2\nonumber\\
&=||[{X}_{\mathcal S},{X}_{\mathcal{T}_{\mathcal S}^c}][-G^\top ,I]^\top \bm{\beta}_{\mathcal{T}_{\mathcal S}^c}||^2\nonumber\\
&\geq \lambda_{\text{min}}({X}^\top _{\mathcal S^*}{X}_{\mathcal S^*})\cdot ||[-G^\top ,I]^\top \bm{\beta}_{\mathcal{T}_{\mathcal S}^c}||^2\nonumber\\
&\geq \lambda_{\text{min}}({X}^\top _{\mathcal S^*}{X}_{\mathcal S^*})\cdot||\bm{\beta}_{\mathcal{T}_{\mathcal S}^c}||^2,\label{p2.3}
\end{align}
where $G=({X}^\top _{\mathcal{S}}{X}_{\mathcal{S}})^{-1}{X}^\top _{\mathcal{S}}{X}_{\mathcal{T}_{\mathcal S}^c}$. Then combining inequalities \eqref{p2.2} and \eqref{p2.3}, we have
\begin{align}
\max\limits_{i\in {\mathcal T}_{\mathcal S}^c}({X}^\top _i{M}_{\mathcal S}{X}_{\mathcal{T}_{\mathcal S}^c}\bm{\beta}_{\mathcal{T}_{\mathcal S}^c})^2&\geq |\mathcal{T}_{\mathcal S}^c|^{-1}\cdot\lambda^2_{\text{min}}({X}^\top _{\mathcal S^*}{X}_{\mathcal S^*})\cdot||\bm{\beta}_{\mathcal{T}_{\mathcal S}^c}||^2\nonumber\\
&\geq \lambda^2_{\text{min}}({X}^\top _{\mathcal S^*}{X}_{\mathcal S^*})\cdot\beta^2_{\text{min}}.\label{p2.4}
\end{align}
With the fact that ${X}^\top _{i}{X}_{i}\geq ||{X}^\top _i{M}_{\mathcal S}||^2$, equation \eqref{p2.1} is followed by
\begin{align*}
\max\limits_{i\in {\mathcal T}_{\mathcal S}^c}({X}^\top _i{M}_{\mathcal S}{Y})^2
&\geq \frac{1}{2}\max\limits_{i\in {\mathcal T}_{\mathcal S}^c}({X}^\top _i{M}_{\mathcal S}{X}_{\mathcal{T}_{\mathcal S}^c}\bm{\beta}_{\mathcal{T}_{\mathcal S}^c})^2-\max\limits_{i\in {\mathcal T}_{S}^c}({X}^\top _i{M}_{\mathcal S}{\bm\epsilon})^2\\
&\geq \frac{\beta^2_{\text{min}}}{2}\cdot\lambda^2_{\text{min}}({X}^\top _{\mathcal S^*}{X}_{\mathcal S^*})-\max\limits_{i\in {\mathcal  T}_{S}^c}({X}^\top _{i}{X}_{i})\cdot\max\limits_{i\in \mathcal{T}_{\mathcal S}^c}\left(\frac{{X}^\top _i{M}_{\mathcal S}{\bm\epsilon}}{||{X}^\top _i{M}_{\mathcal S}||}\right)^2.
\end{align*}
\end{proof}

\section*{Acknowledgements}
We would like to thank Professor Hansheng Wang for his valuable comments on some technical details in our proof.

\bibliography{aosref}
\end{document}